\let\oldabstract\abstract
\renewcommand\abstract{%
  \providecommand\keywords{\par\medskip\noindent\textit{Keywords:}\xspace}
  \oldabstract\noindent\ignorespaces}
\numberwithin{equation}{section}
\theoremstyle{plain} 
\newtheorem{theorem}{Theorem}[section]
\newtheorem{lemma}[theorem]{Lemma}
\newtheorem{proposition}[theorem]{Proposition}
\newtheorem{definition}[theorem]{Definition}
\newtheorem{corollary}[theorem]{Corollary}
\def\R{\mathbb R}
\def\E{\mathbb E}
\def\F{\mathcal F}
\def\K{\mathcal K}
\def\N{\mathbb N}
\def\1{\mathbf{1}}
\def\X{\mathbf{X}}
\def\gn{\widehat{g}_n}
\def\wrho{{\widehat{\rho}_n}}
\def\Dn{{|D_n|}}
\def\convl{\xrightarrow[n\rightarrow +\infty]{distr.}}
\def\convP{\xrightarrow[n\rightarrow + \infty]{\mathbb{P}}}
\def\cum{\mathrm{Cum}}
\begin{document}

\title{Brillinger mixing of determinantal point processes and statistical applications}

\author[1]{Christophe Ange Napol\'eon Biscio}
\author[1,2]{Fr\'ed\'eric Lavancier}
\affil[1]{ Laboratoire de Math\'ematiques Jean Leray\\
University of Nantes, France}
 \affil[2]{ Inria, Centre Rennes Bretagne Atlantique, France}

\date{}

\maketitle

\begin{abstract}
Stationary determinantal point processes are proved to be Brillinger mixing. This property is an important  step towards asymptotic statistics for these processes. As an important example, a central limit theorem for a wide class of functionals of determinantal point processes is established. This result yields in particular the asymptotic normality of the estimator of the intensity of a stationary determinantal point process  and of the kernel estimator of its pair correlation. 
%
  \keywords regularity, inhibition, moment measures, pair correlation function, intensity, kernel estimator. \end{abstract}

\section{Introduction}

Determinantal point processes (DPPs) are models for repulsive point patterns, where 
nearby points of the process tend to repel each other. They have been introduced in their general form in~\cite{macchi1975coincidence} and extensively studied in Probability theory, see~\cite{hough2009zeros} and \cite{Soshnikov:00}.  From a statistical perspective, DPPs have been applied in machine learning \cite{Kulesza:Taskar:12}, spatial statistics \cite{lavancierpublish,lavancier_extended} and telecommunication  \cite{deng2014ginibre,Miyoshi:Shirai13}.
The growing interest for DPPs  in the statistical community is due to their appealing properties: They can be quickly and perfectly simulated, parametric models can easily be constructed,  their moments are known and the likelihood has a closed form expression. Their definition and some of their properties are recalled in Section~\ref{rappelDPP} and we refer to~\cite{lavancierpublish} for more details. Some realizations are showed in Figure~\ref{fig:samples}.

We focus in this paper on stationary DPPs on the continuous  space $\R^d$ and we prove that they are Brillinger mixing. To the best of our knowledge, no mixing property was established so far for DPPs. The Brillinger mixing property is an important step towards asymptotic statistics for DPPs, which are mainly unexplored in the literature.
The definition, recalled in Section~\ref{section preliminaire brillinger}, is based on the moments of the process. 
Specifically, a stationary point process is Brillinger mixing if for any $k\geq 2$ the total variation of its reduced factorial cumulant measure of order $k$ is finite, see for instance~\cite{heinrichasymptotic:13} or~\cite{krickeberg1980}. Already known Brillinger mixing point processes include Poisson cluster processes and Mat\'ern hardcore point processes (of type I, type II and some generalizations as in~\cite{stoyangeneralizedmatern}), see \cite{heinrichschmidt1985shot} and \cite{heinrichpawlasabsolute}. As far as we know, the Mat\'ern hardcore models are the only models of repulsive stationary point processes that have been proved to be Brillinger mixing. Our result shows that DPPs provide a new flexible class of repulsive Brillinger mixing point processes.

In Section~\ref{section statistical application brillinger}, we give some applications of the Brillinger mixing property of DPPs. These are mainly based on general results established in~\cite{jolivetTCL}, \cite{heinrichstellaintegrated} and \cite{StellaKleinempirical2011}, that we extend and/or simplify in the setting of stationary DPPs.  Namely, we prove the asymptotic normality of a wide class of functionals of order $p$ of a DPP, in the spirit of~\cite{jolivetTCL}. This result allows in particular to retrieve the asymptotic behavior of the estimator of the intensity of a DPP, known since~\cite{SoshnikovGaussianLimit}, and to get the asymptotic normality of the kernel estimator of the pair correlation function of a DPP, which is a new result presented in Section~\ref{pcf}.  The Brillinger mixing property is useful for many other applications, see for instance~\cite{HeinrichProkesova:10}, \cite{jolivetuppermoment} and \cite{jolivet88momentestimation}. 
In an ongoing project \cite{biscio_contrast}, this property is used to get the asymptotic normality of minimum contrast estimators for parametric DPPs.

The reminder of this paper is organized as follows. Section~\ref{preliminaries} gathers some basic facts about stationary DPPs,  moment measures of a point process and the Brillinger mixing property.  Our main result stating that stationary DPPs are Brillinger mixing is presented in Section~\ref{section brillinger mixing of DPP}. Some statistical applications are given in~Section~\ref{section statistical application brillinger}. Section~\ref{section proof TCL brillinger} and Section~\ref{section preuve clt gn} contain some technical proofs and Section~\ref{appendix brillinger} is an appendix dealing with the computation of the asymptotic variance in the  statistical applications of Section~\ref{section statistical application brillinger}.

\section{Preliminaries}\label{preliminaries}

\subsection{Determinantal point processes}\label{rappelDPP}
For $d\geq 1$, we denote by 
$\mathcal{B}_0(\R^d)$ the class of bounded Borel sets on $\R^d$. 
For $\mathbf{x} \subset \R^d$ and $B\in \mathcal{B}_0(\R^d)$, $\mathbf{x}(B)$  stands for  the number of points in $\mathbf{x}\cap B$. We let $\mathcal{N} := \lbrace \mathbf{x} \subset \R^d,\ \mathbf{x}(B)< \infty,\ \forall B\in \mathcal{B}_0(\R^d) \rbrace$ be  the space of locally finite configurations of points in $\R^d$. This set is equipped with the $\sigma$-algebra generated by the sets $ \lbrace  \mathbf{x} \subset \R^d,\ \mathbf{x}(B)=n \rbrace$ for all $ B\in \mathcal{B}_0(\R^d) $ and all $n\in \N\cup\{0\}$, where $\N$ denotes the space of positive integers.
A point process on $\R^d$ is a measurable application from a probability space 
 into the set $\mathcal{N}$. We denote a point process by a bold capital letter, usually $\X$, and identify the mapping $\X$ and the associated random set of points. 
All considered point processes are assumed to be simple, i.e. two points of the process never coincide, almost surely. For further details on point processes, we refer to~\cite{daleyvol1,daleyvol2}. 


The factorial moment measures and especially the joint intensities of order $k$ of a point process, defined below, are important quantities of interest. They in particular characterize the law of determinantal point processes.  
\begin{definition}\label{definition factorial moment intro}
The factorial moment measure of order $k$ ($k\geq 1$) of a simple point process $\X$  is the measure on $\R^{dk}$, denoted by $\alpha^{(k)}$, such that for any family of subsets $D_1,\dots,D_k$ in $\R^d$,
\begin{align*}
\alpha^{(k)} \left( D_1\times \ldots \times D_k \right) = \E \left( \sum^{\neq}_{ (x_1,\ldots,x_k) \in \X^k} \1_{\lbrace x_1\in D_1, \ldots,  x_k\in D_k \rbrace }  \right)
\end{align*}
where $\E$ is the expectation over the distribution of $\X$ and the symbol $\neq$ over the sum means that we consider only mutually disjoints $k$-tuples of points $x_1,\ldots,x_k$.

If $\alpha^{(k)}$ admits a density with respect to the Lebesgue measure on $\R^{kd}$, this density is called the joint intensity of order $k$  of $\X$ and is denoted by $\rho^{(k)}$.
\end{definition}

Important particular cases are the factorial moment measure of order one, called the intensity measure, and the factorial moment measure of order two. 
If $\X$ is stationary, then 
for  all $S\subset \R^d$, there exists $\rho>0$ such that $\alpha^{(1)}(S) = \rho |S|$, where $|S|$ stands for the volume (Lebesgue measure) of $S$. In this case, for any $x\in\R^d$, $\rho^{(1)}(x)=\rho$ is called the \textit{intensity} of the process and represents the expected number of points per unit volume. 
Regarding the joint intensity of order two, for $(x,y) \in \R^{2d}$ and $x\neq y$, $\rho^{(2)}(x,y)$ may be viewed heuristically as  the probability that there is a point of the process in a small neighbourhood around $x$ and another point in a small neighbourhood around $y$. 
 In spatial statistics, the second order properties of a point process are often studied through the \textit{pair correlation function }(pcf).  The pcf is defined  for almost every $(x,y)\in\R^{2d}$ by 
\begin{align*}
 g(x,y) = \frac{\rho^{(2)}(x,y)}{\rho(x)\rho(y)}.
\end{align*}
In the stationary and isotropic case, $g(x,y)=g_0(r)$ depends only  on the Euclidean distance $r=|x-y|$. Intuitively, $g_0(r)$ is the quotient of the probability that two points occur at distance $r$ (taking into account the interaction induced by the process) and the same probability if there was no interaction. Consequently, for $r>0$, a common interpretation, see for instance~\cite{stoyan1987stochastic}, is that $g_0(r)>1$  characterizes clustering at distance $r$ while $g_0(r)<1$ characterizes repulsiveness at distance $r$. 

%
%

\medskip

Determinantal point processes (DPPs) are defined through their joint intensities. They have been introduced in their current form by Macchi in~\cite{macchi1975coincidence} to model the position of particles that repel each other. 
Since our results concern only stationary DPPs, we restrict the definition to this subclass, which simplifies the notation. 

\begin{definition}\label{DPPdefinition intro}
Let $C: \R^d \rightarrow \R$ be a function.
A point process $\X$ on $\R^d$ is  a stationary DPP with kernel $C$, in short $\X\sim DPP(C)$,  if for all $k\geq 1$ its joint intensity of order $k$ satisfies the relation
\begin{align*}
 \rho^{(k)}(x_1,\ldots x_k)=\det [C](x_1,\dots,x_k)
\end{align*}
for almost every $(x_1,\dots,x_k)\in \R^{dk}$, where $[C](x_1,\dots,x_k)$ denotes the matrix with entries $C(x_i-x_j)$,  $1\leq i,j\leq k$.
\end{definition}

It is actually possible to consider complex-valued kernels and/or non-stationary DPPs, but this is not the setting of this paper and we refer to~\cite{hough2009zeros} for a review on DPPs in the general case. The existence of a DPP requires several conditions on the kernel $C$. Sufficient conditions in the stationary case are provided in the next proposition. They rely on the Fourier transform of $C$ and are easy to verify in practice, unlike the general conditions for non stationary DPPs, see~\cite{hough2009zeros}. 

We define the Fourier transform of a function $h\in L^1(\R^d)$ as 
\begin{align*}
  \F(h)(t)=\int_{\R^d} h(x) e^{2i\pi x\cdot t}dx,\quad \forall t \in \R^d
\end{align*}
 and extend this definition to  $L^2(\R^d)$ by Plancherel's theorem, see~\cite{stein1971fourier}. We have the following existence result.

\begin{proposition}[\cite{lavancierpublish}]\label{DPPexistence intro}
Assume $C$ is a symmetric continuous real-valued function in $L^2(\R^d)$. Then $DPP(C)$ exists if and only if  $0\leq \F(C)\leq 1$.
\end{proposition}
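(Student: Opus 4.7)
The plan is to reduce the claim to the classical Macchi--Soshnikov existence criterion: a simple point process on $\R^d$ whose joint intensity of order $k$ equals $\det[C](x_1,\dots,x_k)$ exists if and only if, for every compact $S\subset\R^d$, the integral operator $\K_S$ on $L^2(S)$ with kernel $(x,y)\mapsto C(x-y)$ is self-adjoint, trace class, and has spectrum contained in $[0,1]$. Because $C(x-y)$ is translation-invariant, $\K_S$ is the compression of a convolution operator on $L^2(\R^d)$, and Fourier analysis via Plancherel is the natural tool to convert the spectral bounds into the pointwise bounds $0\le\F(C)\le1$.

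For sufficiency, assume $0\le\F(C)\le1$ almost everywhere. Then $\F(C)\in L^\infty(\R^d)$, so the convolution operator $\K$ on $L^2(\R^d)$ with kernel $C$ is bounded, self-adjoint, and unitarily equivalent via $\F$ to multiplication by $\F(C)$; in particular $0\le\K\le I$. Writing $\K_S=P_S\K P_S$, where $P_S$ is the orthogonal projection $L^2(\R^d)\to L^2(S)$, the bounds $0\le\K_S\le I$ are inherited, so the spectrum of $\K_S$ lies in $[0,1]$. Continuity of $C$ and boundedness of $S$ give $\mathrm{tr}(\K_S)=|S|\,C(0)<\infty$, and combined with positivity this forces $\K_S$ to be trace class.

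For necessity, I would argue by contraposition. Suppose $\F(C)<0$ on a Borel set $A$ of positive Lebesgue measure. For any $f\in L^2(\R^d)$ supported in a compact $S$, Plancherel gives
\begin{align*}
\langle f,\K_S f\rangle=\int_{\R^d}|\F(f)(t)|^2\,\F(C)(t)\,dt.
\end{align*}
Truncating $A$ to a bounded subset $A'$ on which $\F(C)$ is bounded away from $0$, one picks $f_0\in L^2(\R^d)$ with $\F(f_0)=\1_{A'}$, which makes the right-hand side strictly negative. Approximating $f_0$ in $L^2$ by compactly supported functions $f_n$ (using the density of $C_c(\R^d)$ in $L^2$) preserves the negative sign of the quadratic form for $n$ large, contradicting $\K_{S_n}\ge0$ for any compact $S_n$ containing the support of $f_n$. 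The bound $\F(C)\le1$ is obtained analogously from the constraint $I-\K_S\ge0$, using the same Fourier identity.

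The main obstacle is really the invocation of the Macchi--Soshnikov criterion, which I would cite as a black box; granted that, the argument is a clean Fourier-analytic translation. The most delicate technical point is the approximation step in the necessity part: since $\F(C)$ is only in $L^2$, it may take arbitrarily large positive values outside $A'$, so one must ensure that the perturbations $\F(f_n)-\F(f_0)$ are controlled in $L^2$ against $\F(C)^+$. The preliminary truncation of $A$ to a bounded set $A'$ on which $|\F(C)|$ is bounded below is precisely what makes this control effective.
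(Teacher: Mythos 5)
The paper itself offers no proof of this proposition: it is imported verbatim from \cite{lavancierpublish}, and the argument given there is exactly the one you outline — reduce to the Macchi--Soshnikov spectral criterion and translate the condition $0\leq \K_S\leq I$ for all compact $S$ into $0\leq\F(C)\leq 1$ via Plancherel, using that the kernel is a convolution kernel. So your route is the standard one rather than a genuinely different one. Your sufficiency half is sound; only phrase the trace-class step as the standard lemma (a positive self-adjoint integral operator with continuous kernel on a compact set is trace class, with trace $\int_S C(0)\,dx=|S|C(0)$) rather than computing the trace before knowing the operator is trace class.

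The one genuine soft spot is the approximation step in the necessity half, and your proposed remedy does not close it. The error you must control is $\int\bigl(|\F(f_n)|^2-|\F(f_0)|^2\bigr)\F(C)\,dt$; since $\F(C)$ is only in $L^2$, Cauchy--Schwarz requires $\bigl\||\F(f_n)|^2-|\F(f_0)|^2\bigr\|_{L^2}\to 0$, which does \emph{not} follow from $f_n\to f_0$ in $L^2$ alone (one needs a uniform $L^\infty$ or $L^4$ bound on $\F(f_n)$). Truncating $A$ to a bounded $A'$ on which $|\F(C)|$ is bounded below makes the main term $\int_{A'}\F(C)$ strictly negative, but it does nothing to control the error term on the complement of $A'$, where $\F(C)$ may be large and positive; so the sign preservation is not justified as written. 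A concrete fix: take $f_n=f_0\,\chi(\cdot/n)$ with $\chi$ smooth, compactly supported and $\chi(0)=1$; then $f_n$ is compactly supported, $\F(f_n)=\1_{A'}\ast\F(\chi_n)$ with $\|\F(\chi_n)\|_{L^1}$ independent of $n$, so $\F(f_n)$ is uniformly bounded and converges to $\1_{A'}$ in $L^2$, hence $\bigl\||\F(f_n)|^2-\1_{A'}\bigr\|_{L^2}\to 0$ and the quadratic form converges to $\int_{A'}\F(C)<0$, contradicting $\K_S\geq 0$. The same device, applied to $\|f\|_{L^2}^2-\langle f,\K_S f\rangle=\int|\F(f)|^2(1-\F(C))\,dt$, yields $\F(C)\leq 1$. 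With that replacement, and granting the Macchi--Soshnikov criterion as a black box (as the paper implicitly does through the citation), your proof is complete.
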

In other words, by Proposition~\ref{DPPexistence intro} any continuous real-valued covariance function $C$ in $L^2(\R^d)$ with $\F(C)\leq 1$ defines a DPP.  Henceforth, we assume the following condition.\medskip

\noindent{\bf Condition $\K(\rho)$.} A kernel $C$ is said to verify condition $\K(\rho)$ if $C$ is a symmetric continuous real-valued function in $ L^2(\R^d)$ with $C(0)=\rho$ and $0\leq \F(C)\leq 1$.\medskip


By definition, all moments of a DPP are explicitly known. In particular, assuming  $\K(\rho)$,  $DPP(C)$ is stationary with intensity $\rho$ and denoting  $g$ its pcf  we have 
 \begin{align}\label{DPPpcf}
g(x,y)=1 - \frac{C(x-y)^2}{\rho^2}
\end{align}
for almost every $(x,y)\in\R^{2d}$. Consequently $g\leq 1$, which shows that DPPs exhibit repulsiveness.

\medskip

A first example of stationary DPP is  the stationary Poisson process with intensity $\rho$, which corresponds to  the kernel
$C(x)=\rho \1_{\lbrace x=0 \rbrace}$. 
However, this example is very particular and represents in some sense the extreme case of a  DPP without any interaction. In particular its kernel does not satisfy $\K(\rho)$ since it is not continuous. In contrast,  $\K(\rho)$ is verified by numerous covariance functions , and this makes easy the definition of parametric families of DPPs, where the condition $\F(C)\leq 1$ implies some restrictions on the parameter space. Some  examples are  given in~\cite{lavancierpublish} and \cite{bisciobernoulli},  where the stationary Poisson process appears as a degenerated case. For instance, the Gaussian kernels correspond to   $C(x) = \rho e^{-\left| x/\alpha \right|^2}$,  $x\in\R^d$, where the existence condition implies $\alpha \leq 1/(\sqrt{\pi}\rho^{1/d})$. 
Another important example is the most repulsive stationary DPP with intensity $\rho$, as defined and determined in~\cite{bisciobernoulli}. Its kernel $C$ is the Fourier transform of the indicator function of the Euclidean ball centered at the origin with volume $\rho$, which gives
\begin{align}\label{jinc}
 C(x)= \frac{\sqrt{\rho\Gamma(\frac{d}{2}+1)}}{\pi^{d/4}} \frac{J_{\frac{d}{2}}\left(2\sqrt{\pi}\Gamma(\frac{d}{2}+1)^{\frac{1}{d}}\rho^{\frac{1}{d}} |x|\right)}{|x|^{\frac{d}{2}}}, \quad \forall x\in\R^d,
\end{align}
where $J_{\frac{d}{2}}$ denotes the Bessel function of the first kind of order $\frac{d}{2}$. Some examples of realisations of DPPs are given in Figure~\ref{fig:samples}.

\begin{figure}[h]
\begin{center}
\begin{tabular}{ccc} 
 \includegraphics[scale=0.34]{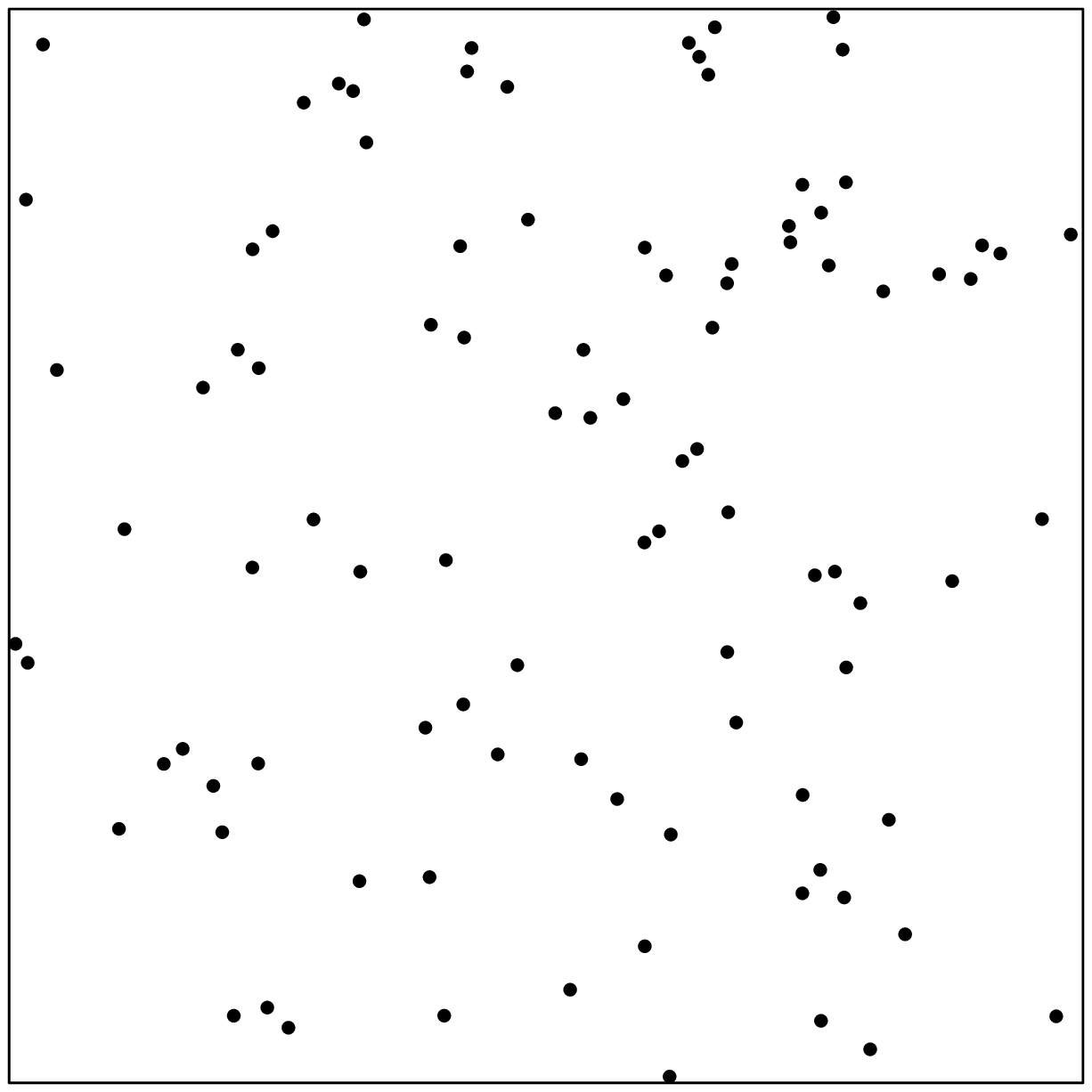}   &  \includegraphics[scale=0.34]{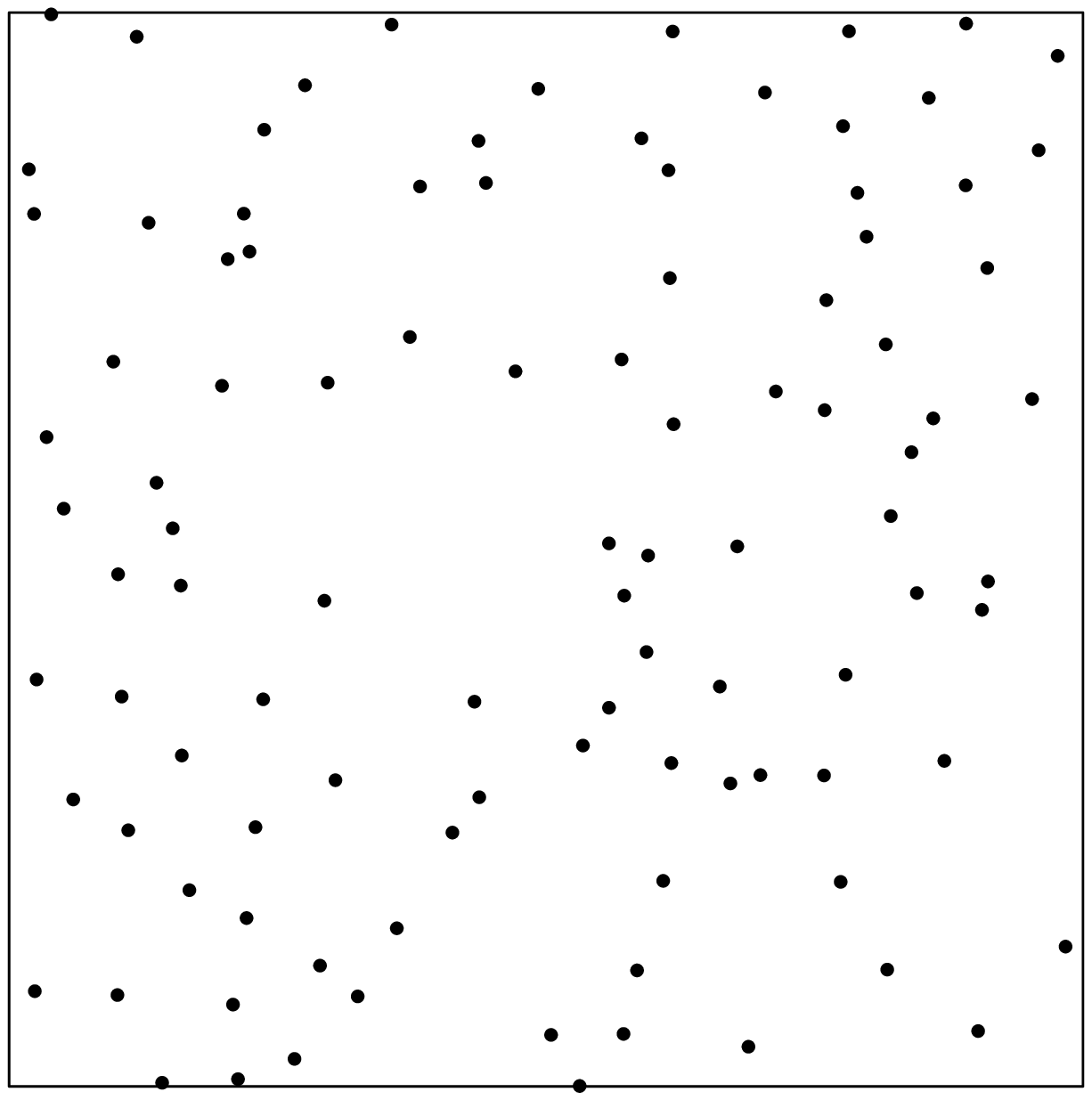} & \includegraphics[scale=0.34]{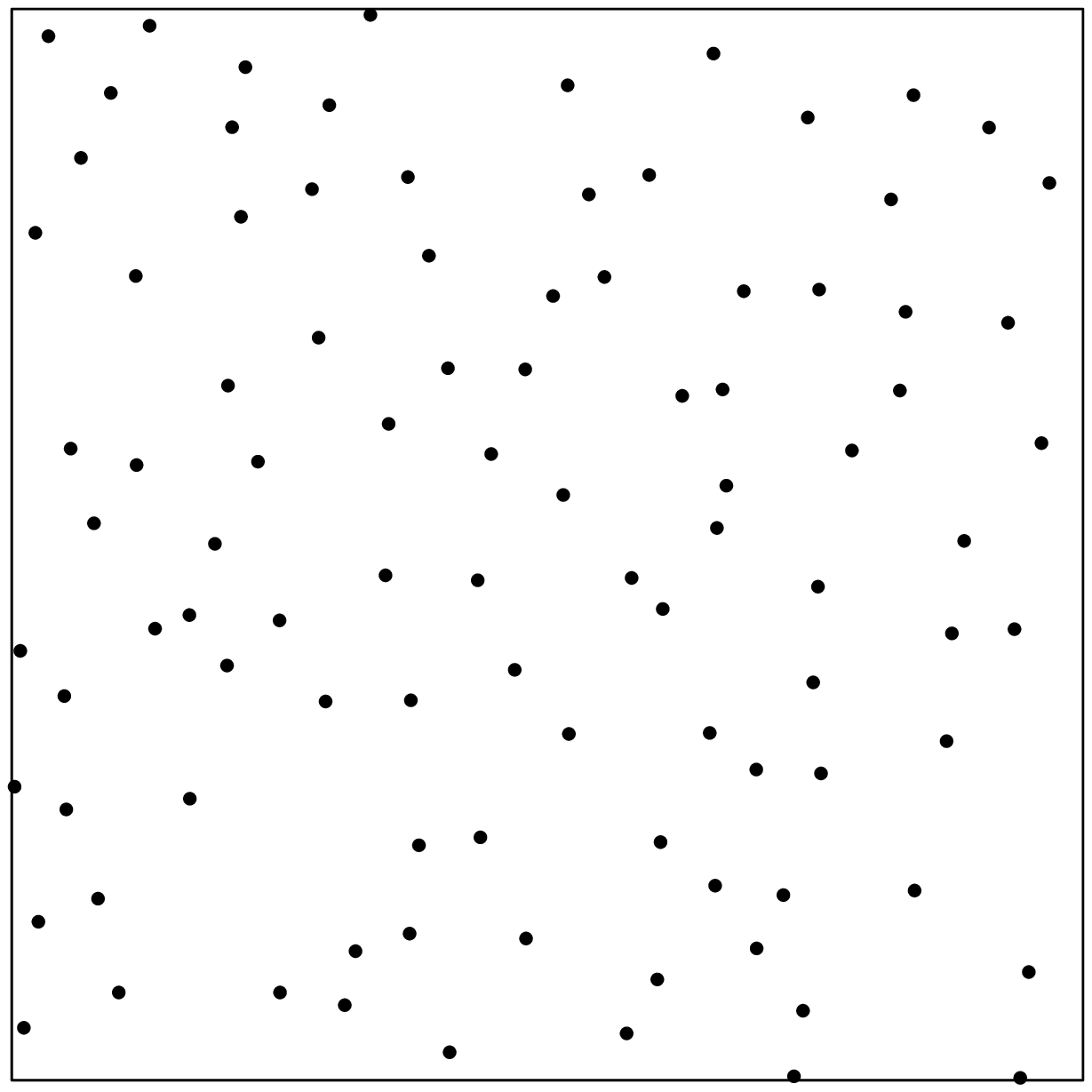}
\end{tabular}
\caption{From left to right,  letting the intensity $\rho=100$, realizations on $[0,1]^2$ of a stationary Poisson process, a DPP with a Gaussian kernel and the maximal  possible choice for the range parameter $\alpha$ ($\alpha=0.056$), a DPP with kernel \eqref{jinc}.}\label{fig:samples}
\end{center}
\end{figure}

\subsection{Moment measures and Brillinger mixing}\label{section preliminaire brillinger}

 In this section, we review the definition of the cumulant and factorial cumulant moment measures of a point process $\X$ as well as their reduced version. These are at the basis of the Brillinger mixing property defined in the following. The relation with the Laplace and the probability generating functionals of $\X$ is also described. We assume that for any bounded set $A$, the random variable $\X(A)$ has moments of any order. This ensures that the quantities introduced in this section are well defined. Note that by definition this assumption holds true for a DPP.  Further details on these topics may be found in \cite{daleyvol1,daleyvol2} and \cite{krickeberg1980}. 

\begin{definition}\label{definition cumulant}
 For $k\in \N$, the  cumulant  of the $k$ random variables $X_1,\ldots,X_k$ is, if it exists,
 \begin{align*}
  \cum(X_1,\ldots,X_k) = \left. \frac{\partial^k}{\partial t_1 \ldots \partial t_k} \log  \E \left[ \exp{\left(\sum_{i=1}^k t_i X_i\right)} \right] \right|_{t_1=\ldots=t_k=0}.
 \end{align*}
 The $k$-th order cumulant of the random variable $X$ is $\cum_k(X):=\cum(X,\ldots,X)$.
\end{definition}

The notion of cumulant of random variables extends to point processes as follows.

\begin{definition}\label{definition cumulant measure} 
 For $k\in \N$, the $k$-th order  cumulant moment measure $\gamma_{k}$ of a point process $\X$ is a locally finite signed measure on $\R^{dk}$ defined for any bounded measurable sets $A_1,\ldots, A_k$ in $\R^d$ by
\begin{align*}
\gamma_{k}\left( \prod_{i=1}^k A_i \right) = \cum \left( \sum_{x\in \X} \1_{\lbrace x\in A_1 \rbrace}, \ldots , \sum_{x\in \X} \1_{\lbrace x\in A_k \rbrace} \right).
\end{align*}
\end{definition}

%

\begin{definition}\label{definition factorial cumulant measure}
 For $k\in \N$, the $k$-th order factorial cumulant moment measure $\gamma_{[k]}$ of a point process with factorial moment measure $\alpha^{(r)}$, for $r\leq k$, is a locally finite signed measure on $\R^{dk}$ defined for any bounded measurable sets $A_1,\ldots, A_k$ in $\R^d$ by
\begin{align*}
\gamma_{[k]}\left( \prod_{i=1}^k A_i \right) = \sum_{j=1}^k (-1)^{j-1} (j-1)! \ \sum_{B_1, \ldots, B_j \in \mathcal{P}_j^k } \prod_{i=1}^j \alpha^{\left(\left|K_i\right|\right)} \left( \prod_{k_i \in B_i} A_{k_i} \right), 
\end{align*}
 where for all $j\leq k$, $\mathcal{P}_j^k$ denote the set of all partitions of $\lbrace 1,\ldots,k \rbrace$ into $j$ non empty sets $B_1,\ldots,B_j$.
\end{definition}
For stationary point processes, we can define the so-called reduced version of the previous measure.

\begin{definition}\label{definition reduced measure}
 For any $k\geq 2$, the reduced $k$-th order  factorial cumulant moment measure $\gamma^{red}_{[k]}$ of a stationary point process is a locally finite signed measure on $ \R^{d(k-1)}$ defined for any bounded measurable sets $A_1,\ldots, A_k$ in $\R^d$ by 
 \begin{align*}
 \gamma_{[k]} \left( \prod_{i=1}^k A_i \right) = \int_{A_k} \gamma^{red}_{[k]} \left( \prod_{i=1}^{k-1} (A_i-x)  \right) dx
 \end{align*}
 where for $i=1,\ldots,k-1$, $A_i-x$ is the translation of the set $A_i$ by $x$.
\end{definition}
The reduced cumulant moment measure is defined similarly. An important property of signed measures is given by the following theorem leading to the definition of the total variation of a signed measure. 

\begin{theorem}[Hahn-Jordan decomposition, {see~\cite[Theorem 5.6.1]{dudley2002real}}]\label{theorem hahn jordan decomposition}
 For any signed measure $\nu$, there exist two measures $\nu^+$ and $\nu^-$ uniquely determined by $\nu$ such that at least one of them is finite and 
 \begin{align*}
 \nu=\nu^+ - \nu^-.
 \end{align*}
\end{theorem}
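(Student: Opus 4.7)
Since this is the classical Hahn--Jordan decomposition, the plan is to follow the standard two-step route: first establish a Hahn decomposition of the underlying measurable space into a positive and a negative set, then read off $\nu^+$ and $\nu^-$ from it, and finally verify uniqueness.

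For the Hahn decomposition, I would assume without loss of generality that $\nu$ does not take the value $-\infty$ (by definition a signed measure avoids at least one of $\pm\infty$). Set $s = \sup\{\nu(A) : A \text{ measurable}\} \in [0,+\infty]$ and choose a sequence $A_n$ with $\nu(A_n) \to s$. The key auxiliary lemma is that any measurable set with positive $\nu$-measure contains a measurable \emph{positive} subset of at least the same $\nu$-measure; this is proved by an inductive extraction of increasingly negative subsets whose total mass is shown to be controlled by a convergent series. Applying this lemma to each $A_n$ and taking an appropriate limit set (for instance a $\limsup$ after the extraction) produces a positive set $P$ with $\nu(P) = s$. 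Setting $N = \R^d \setminus P$, if some measurable $B \subseteq N$ had $\nu(B) > 0$, extracting a positive subset $B' \subseteq B$ would give $\nu(P \cup B') > s$, contradicting the definition of $s$; hence $N$ is a negative set.

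Next, I would define $\nu^+(A) := \nu(A \cap P)$ and $\nu^-(A) := -\nu(A \cap N)$. Countable additivity of $\nu$ yields countable additivity of $\nu^\pm$, and the defining properties of $P$ and $N$ give non-negativity; moreover $\nu^-(\R^d) = -\nu(N) < +\infty$ because $\nu$ avoids $-\infty$, so $\nu^-$ is finite. The identity $\nu = \nu^+ - \nu^-$ is immediate. For uniqueness, if $\nu = \mu^+ - \mu^-$ with $\mu^\pm$ concentrated on disjoint measurable sets $P', N'$, I would compute $\nu$ on each of the four intersection cells $P \cap P'$, $P \cap N'$, $N \cap P'$, $N \cap N'$, and use non-negativity of $\mu^\pm$ and of $\nu^\pm$ together with the identity $\nu = \mu^+ - \mu^- = \nu^+ - \nu^-$ to conclude $\mu^+ = \nu^+$ and $\mu^- = \nu^-$.

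The main obstacle is the extraction step producing the positive set $P$: the auxiliary lemma requires a delicate inductive construction, and one must be careful that $\nu$ may still take the value $+\infty$ even while avoiding $-\infty$, so arguments of the form ``$\nu(P \setminus B) = \nu(P) - \nu(B)$'' have to be justified by finiteness of $\nu(B)$ each time. Once this combinatorial heart of the argument is in place, constructing $\nu^\pm$ and proving uniqueness are essentially bookkeeping.
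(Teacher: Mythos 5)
The paper does not prove this statement at all: it is quoted verbatim (in abbreviated form) from Dudley, so there is no internal proof to compare against, and your sketch is exactly the classical argument found in the cited reference --- build a Hahn decomposition $\R^d = P \cup N$ via the extraction lemma and a supremum argument, set $\nu^{\pm}$ to be the restrictions, then prove uniqueness. The outline is sound, with two caveats worth fixing. First, the ``appropriate limit set'' should be the \emph{union} of the extracted positive subsets $P_n \subseteq A_n$, not a $\limsup$: a countable union of positive sets is positive and satisfies $\nu(P) \geq \nu(P_n)$ for every $n$, hence $\nu(P) = s$, whereas a $\limsup$ need not contain any $P_n$ and can lose all the mass (take the $P_n$ disjoint). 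Second, uniqueness as you prove it is uniqueness among \emph{mutually singular} decompositions (your $\mu^{\pm}$ are assumed concentrated on disjoint sets $P', N'$); this is the right reading, since with only the requirement $\nu = \nu^+ - \nu^-$ and one of them finite, the pair is not unique (add any finite measure to both). The paper's abbreviated statement omits the disjoint-supports clause that is part of Dudley's Theorem 5.6.1 and that your uniqueness step actually needs, so you should state it explicitly as a hypothesis of the uniqueness claim.
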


\begin{definition}\label{definition total variation}
Let $\nu$ be a signed measure with Hahn-Jordan decomposition $\nu=\nu^+-\nu^-$.
The total variation measure $|\nu|$ of $\nu$ is defined by
 \begin{align*}
  |\nu| = \nu^+ + \nu^-.
   \end{align*}
\end{definition}

Following Theorem~\ref{theorem hahn jordan decomposition},  for $k\geq 2$, we denote the Hahn-Jordan decomposition of the reduced $k$-th order moment factorial cumulant measure $\gamma^{red}_{[k]}=\gamma^{+red}_{[k]} - \gamma^{-red}_{[k]}$.  

\begin{definition}\label{definition brillinger}
 A point process is Brillinger mixing if, for $k\geq 2$, we have
 \begin{align*}
  \left|\gamma^{red}_{[k]}\right| \left( \R^{d(k-1)} \right)  <+\infty.
 \end{align*}
\end{definition}
The different moment measures of a point process $\X$ are related to the power series expansion of the Laplace and the probability generating functionals of $\X$. 
\begin{definition}\label{definition laplace transform}
 The Laplace functional $L_\X$ of a point process $\X$ is defined for any bounded measurable function $f$ that vanishes outside a bounded set of $\R^d$ by
 \begin{align*}
  L_\X(f) = \E\left(e^{- \sum_{x\in\X} f(x)}\right).
 \end{align*}
\end{definition}


\begin{definition}\label{definition pgf}
The probability generating functional of a point process $\X$ is defined for any function $h$ from $\R^d$ into $[0,1]$, such that $1-h$ vanishes outside a bounded set, by
\begin{align*}
 G_\X(h)=\E\left( \exp\left({\sum_{x\in\X} \log h(x)}\right) \right).
\end{align*}
\end{definition}
Notice that for any function $h$ defined as in Definition~\ref{definition pgf} and taking values within a closed subset of $(0,1]$, we have 
 \begin{align*}
  G_\X(h) = L_\X(-\log(h) ).
 \end{align*}

\begin{proposition}[{\cite[Section 9.5]{daleyvol2}}]\label{power expansion functional}
Let $\X$ be a point process with cumulant  moment measures $\gamma_{k}$ and factorial cumulant moment measures  $\gamma_{[k]}$.
Let $f$ and $\eta$ be bounded measurable functions on $\R^d$ that vanish outside a bounded set. Assume further that $\eta$ takes values in $[0,1]$. Then, for all $N\in\N$, we have the following power series expansions when $s\geq 0$ and $s\to 0$
 \begin{align*}
  \log L_\X(s f) &= \sum_{j=1}^{N} \frac{(-s)^j}{j!} \int f(x_1)f(x_2) \ldots f(x_j) \gamma_j(dx_1 \times dx_2 \times \ldots \times dx_j) +o(s^N),\\
  \log G_\X(1-s \eta) &= \sum_{j=1}^{N} \frac{(-s)^j}{j!}  \int \eta(x_1)\eta(x_2) \ldots \eta(x_j) \gamma_{[j]}(dx_1 \times dx_2 \times \ldots \times dx_j) +o(s^N).
 \end{align*}
\end{proposition}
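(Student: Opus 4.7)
My plan is to treat the two expansions in parallel, using in each case the strategy of reducing to a Taylor expansion of a single-variable log-moment generating function and then identifying coefficients by multilinearity.

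For the Laplace functional, write $S := \sum_{x\in\X} f(x)$, so that $L_\X(sf)=\E(e^{-sS})$. Since $f$ is bounded and vanishes outside a bounded set $B$, one has $|S|\leq \|f\|_\infty \X(B)$, and the standing moment assumption on $\X$ guarantees that $S$ has moments of every order. Classical one-dimensional Taylor expansion of the cumulant generating function then gives
\begin{align*}
\log \E(e^{-sS}) \;=\; \sum_{j=1}^{N} \frac{(-s)^j}{j!}\,\cum_j(S) + o(s^N), \qquad s\to 0^+,
\end{align*}
with the $o(s^N)$ remainder justified by the finiteness of the $(N+1)$-st absolute moment of $S$ (equivalently, by an integral-remainder form of Taylor's formula applied to $s\mapsto \log\E(e^{-sS})$). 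It then remains to identify $\cum_j(S)$ with $\int f^{\otimes j}\,d\gamma_j$. For simple functions $f=\sum_i a_i\1_{A_i}$ this follows directly from Definition~\ref{definition cumulant measure} and multilinearity of cumulants in each argument, and one extends to general bounded $f$ with bounded support by a standard monotone class / dominated convergence argument, again using the uniform moment control.

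For the probability generating functional, I would start from the algebraic expansion of the finite product
\begin{align*}
\prod_{x\in\X}\bigl(1-s\eta(x)\bigr) \;=\; \sum_{k=0}^{\infty} \frac{(-s)^k}{k!}\sum^{\neq}_{(x_1,\ldots,x_k)\in\X^k}\eta(x_1)\cdots\eta(x_k),
\end{align*}
valid because $1-\eta$ has bounded support, so only finitely many factors differ from $1$. Taking expectations and using the definition of $\alpha^{(k)}$ yields
\begin{align*}
G_\X(1-s\eta) \;=\; \sum_{k=0}^{N} \frac{(-s)^k}{k!}\int \eta(x_1)\cdots\eta(x_k)\,\alpha^{(k)}(dx_1\times\cdots\times dx_k) + o(s^N),
\end{align*}
the remainder being again controlled through the factorial moments of $\X(\mathrm{supp}(1-\eta))$. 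Applying $\log$ and using the standard partition-sum formula relating coefficients of $\log\bigl(1+\sum_k c_k s^k\bigr)$ to those of the series itself produces precisely the partition sum over $\mathcal{P}_j^k$ appearing in Definition~\ref{definition factorial cumulant measure}, which converts the $\alpha^{(k)}$-integrals into $\gamma_{[j]}$-integrals and gives the second expansion.

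The main obstacle is not the combinatorics, which is mechanical once one has the right form, but the analytic justification of the $o(s^N)$ remainder uniformly in the choice of $f$ or $\eta$: this is where one must invoke the assumption that $\X(A)$ admits moments of all orders on every bounded $A$, combine it with the boundedness and compact support of the test functions, and then apply Taylor's theorem with integral remainder to the real-analytic functions $s\mapsto \log L_\X(sf)$ and $s\mapsto \log G_\X(1-s\eta)$ at $s=0$.
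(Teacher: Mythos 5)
The paper itself does not prove this proposition: it is imported with a citation to Daley and Vere-Jones (Section 9.5), so there is no internal argument to compare against. Your sketch follows the standard textbook route — expand the functional in (factorial) moment measures, control the truncation error via moments of $\X(B)$ for $B$ the bounded support, take logarithms and use the moment--cumulant partition combinatorics of Definition~\ref{definition factorial cumulant measure}, and identify the multilinear integrals against $\gamma_j$, $\gamma_{[j]}$ by simple functions plus multilinearity — and that is essentially how the cited result is established, so the overall plan is sound.

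Two justifications need tightening. First, the appeal to real analyticity of $s\mapsto\log L_\X(sf)$ is both unjustified and unnecessary: analyticity would require exponential moments of $\X(B)$, which the standing assumption (moments of all orders) does not give; moreover, for sign-changing $f$ even finiteness of $L_\X(sf)$ for $s>0$ is not guaranteed. What you actually need, and what suffices for the paper's use (where $f$ is a nonnegative multiple of an indicator), is the finite-order bound $\bigl|e^{-u}-\sum_{j=0}^{N}\tfrac{(-u)^j}{j!}\bigr|\le \tfrac{u^{N+1}}{(N+1)!}$ for $u\ge 0$, applied to $u=s\sum_{x\in\X}f(x)$ and integrated using $\E\bigl[\X(B)^{N+1}\bigr]<\infty$; one should either restrict to $f\ge 0$ or add an integrability hypothesis. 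Second, and this is the one genuinely missing ingredient, your truncation of $\prod_{x\in\X}(1-s\eta(x))$ at order $N$ is asserted to be "controlled through the factorial moments", but the naive tail bound is $\sum_{k>N}\tfrac{s^k}{k!}\E\bigl[\X(B)^{[k]}\bigr]=\E\bigl[(1+s)^{\X(B)}\bigr]-\dots$, which again demands exponential moments. The correct device is the Bonferroni-type (alternating truncation) inequality: for $u_i=s\eta(x_i)\in[0,1]$, the error in stopping the elementary-symmetric-function expansion of $\prod_i(1-u_i)$ at order $N$ is at most the first omitted term $e_{N+1}(u)\le (s\,\X(B))^{N+1}/(N+1)!$, whose expectation is $O(s^{N+1})$ under the moment assumption. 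With these two points made precise, your argument goes through; the subsequent passage from the $\alpha^{(k)}$-expansion to the $\gamma_{[j]}$-expansion by taking logarithms is exactly the partition formula defining $\gamma_{[k]}$ and is fine as you state it.
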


%
%

 We conclude this section by giving the relation between $\gamma_{k}$ and $\gamma_{[k]}$. To this end, we recall the definition of the Stirling numbers of the first and second kind and refer to~\cite[Section 5.2]{daleyvol1} for a detailed presentation. 
For $x\in\R$ and $k\in\N$,  we denote by $x^{[k]} = x(x-1)\ldots(x-k+1) \1_{\{0\leq k \leq x\}}$ the falling factorial of $x$.
Assuming $k\leq x$ and $1\leq j\leq k$, the Stirling numbers of the first kind $D_{j,k}$ and of the second kind $\Delta_{j,k}$ are defined by the relations
 \begin{align*}
   x^{[k]}&= \sum_{j=1}^k  (-1)^{k-j} D_{j,k} \, x^j \quad\text{and}\quad
     x^{k}= \sum_{j=1}^k  \Delta_{j,k}\,  x^{[j]}.
 \end{align*}


\begin{proposition}\label{stirling fact cum vs cum}
 Let $A$ be a bounded set of $\R^d$. For any integer $k$, we have the relations 
 \begin{align*}
  \gamma_{[k]}( A^k) &= \sum_{j=1}^k (-1)^{k-j} D_{j,k} \gamma_j(A^j),\\
  \gamma_{k} (A^k)&= \sum_{j=1}^k \Delta_{j,k} \gamma_{[j]}(A^j).
 \end{align*}
\end{proposition}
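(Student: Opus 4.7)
My plan is to reduce the two identities to a statement about a single random variable. Fix the bounded set $A$ and set $N := \X(A)$. Specializing Definitions~\ref{definition cumulant measure} and \ref{definition factorial cumulant measure} to $A_1 = \cdots = A_k = A$, one reads off that $\gamma_k(A^k) = \cum_k(N)$, and, since $\alpha^{(j)}(A^j) = \E[N^{[j]}]$, that $\gamma_{[k]}(A^k)$ coincides with the $k$-th factorial cumulant of $N$, i.e.\ with the standard partition-sum formula for cumulants applied to the sequence of factorial moments of $N$. Hence it is enough to establish the two Stirling identities between ordinary and factorial cumulants of any integer-valued random variable with moments of all orders.

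To do so, I would invoke Proposition~\ref{power expansion functional} with $f = \eta = \mathbf{1}_A$. Since $L_\X(s\mathbf{1}_A) = \E[e^{-sN}]$ and $G_\X(1 - s\mathbf{1}_A) = \E[(1-s)^N]$, the substitutions $t = -s$ and $u = -s$ produce the Taylor expansions, as $t,u \to 0$,
\begin{align*}
\log \E[e^{tN}] &= \sum_{j=1}^{M} \tfrac{t^j}{j!}\,\gamma_j(A^j) + o(t^M), \\
\log \E[(1+u)^N] &= \sum_{j=1}^{M} \tfrac{u^j}{j!}\,\gamma_{[j]}(A^j) + o(u^M),
\end{align*}
valid for every truncation order $M\geq 1$. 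The bridge between the two is the elementary identity $\log \E[(1+u)^N] = \log \E[e^{tN}]\big|_{t=\log(1+u)}$.

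I would then translate the Stirling relations from the statement into generating-function form. Expanding $(1+u)^x = e^{x\log(1+u)}$ in two ways and comparing coefficients of $x^j$, using $x^{[k]} = \sum_j (-1)^{k-j} D_{j,k}\,x^j$, gives
$$
\frac{(\log(1+u))^j}{j!} \;=\; \sum_{k \geq j} (-1)^{k-j} D_{j,k}\,\frac{u^k}{k!},
$$
and dually, expanding $e^{tx}$ and using $x^k = \sum_j \Delta_{j,k}\,x^{[j]}$, one obtains $(e^t-1)^j/j! = \sum_{k \geq j} \Delta_{j,k}\,t^k/k!$. Substituting $t = \log(1+u)$ in the first expansion, swapping the finite double sum, and matching coefficients of $u^k/k!$ against the second expansion yields the first identity $\gamma_{[k]}(A^k) = \sum_{j=1}^k (-1)^{k-j} D_{j,k}\,\gamma_j(A^j)$. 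The converse substitution $u = e^t - 1$ paired with the second EGF yields $\gamma_k(A^k) = \sum_{j=1}^k \Delta_{j,k}\,\gamma_{[j]}(A^j)$ by the same bookkeeping.

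The only step that will require any care is checking that substituting $t = \log(1+u)$ into the asymptotic expansion preserves the order of the remainder; this is routine, because $\log(1+u) = u + O(u^2)$ gives $t^M = u^M(1+O(u))$, so $o(t^M) = o(u^M)$ as $u \to 0$. I do not expect any more serious obstacle.
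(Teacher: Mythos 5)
Your proposal is correct and takes essentially the same route as the paper: the paper also applies Proposition~\ref{power expansion functional} with the indicator of $A$, links the two expansions through $G_\X(1-s\1_A)=L_\X(-\log(1-s)\1_A)$, and identifies coefficients via the exponential generating functions of the Stirling numbers (cited from Abramowitz--Stegun rather than rederived). Your reformulation in terms of the ordinary and factorial cumulants of $N=\X(A)$ and your explicit check that $o(t^M)=o(u^M)$ under $t=\log(1+u)$ are only cosmetic variations on that argument.
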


\begin{proof}
We denote by $L$ and $G$ the Laplace and probability generating functionals of a point process with, for $k\geq1$, factorial cumulant moment and cumulant moment measures $\gamma_{[k]}$ and $\gamma_{k}$, respectively. 
By Proposition~\ref{power expansion functional}, for all $N\in \N$, we have as $s\to 0$, $s\geq 0$
\begin{align}\label{dev G1}
 \log G(1-s\1_{\lbrace s\in A \rbrace}) = \sum_{k=1}^{N} \frac{(-s)^k}{k!} \gamma_{[k]}(A^k) + o(s^N).
\end{align}
As noticed after Definition~\ref{definition pgf},
\begin{align*}
  \log G(1-s\1_{\lbrace s\in A \rbrace}) &= \log L(-\log(1-s\1_{\lbrace s\in A \rbrace}))= \log L(-\log(1-s)\1_{\lbrace s\in A \rbrace}).\end{align*}  
  Since  $s \sim -\log(1-s)$ as $s\to 0$, we have by Proposition~\ref{power expansion functional},
  \begin{align*}
  \log G(1-s\1_{\lbrace s\in A \rbrace}) &= \sum_{j=1}^{N} \frac{\left[ \log(1-s)  \right]^j}{j!} \gamma_j(A^j)+ o(s^N).
\end{align*}
By~\cite[(24.1.3.I.B)]{stegun} we deduce that
\begin{align}\label{dev G2}
   \log G(1-s\1_{\lbrace s\in A \rbrace}) &= \sum_{j=1}^{N}  \frac{\gamma_j(A^j)}{j!} j!\sum_{k=j}^{N} (-1)^{k-j} D_{j,k} \frac{(-s)^k}{k!} + o(s^N)\notag \\
                   &= \sum_{k=1}^{N} \frac{(-s)^k}{k!}  \sum_{j=1}^{k} (-1)^{k-j} D_{j,k} \gamma_j(A^j) + o(s^N).
\end{align}
We conclude  by identifying the coefficients in~\eqref{dev G1} and~\eqref{dev G2}. The proof of the second formula is similar,  starting with the other powers expansion in Proposition~\ref{power expansion functional} and using~\cite[(24.1.4.I.B)]{stegun} instead of~\cite[(24.1.3.I.B)]{stegun}

\end{proof}

\section{Main result}\label{section brillinger mixing of DPP}

In this section, we prove in Theorem~\ref{theoreme DPP brillinger} below that a DPP with kernel verifying the condition $\K(\rho)$ is Brillinger mixing. We recall that this mixing property involves the factorial cumulant moments of the DPP. It is not easy to deduce these moments from the initial Definition~\ref{definition factorial cumulant measure}. However, the power series expansion of the log-Laplace functional in Proposition~\ref{power expansion functional}, which is known for a DPP, allows us to derive a closed form expression for the factorial cumulant measures as stated in the following lemma.

%


\begin{lemma}\label{lemme egalite cumulant K}
Consider a DPP with kernel $C$ verifying condition $\K(\rho)$ and, for $k\in \N$, denote its $k$-th factorial cumulant moment measure by $\gamma_{[k]}$. For every measurable bounded set $A$ in $\R^d$ and $k\geq 2$, we have
\begin{align*}
 \gamma_{[k]}(A^k)= (-1)^{k+1} (k-1)! \int_{A^{k}}   C(x_2-x_1) \ldots C(x_1-x_k)  dx_1\ldots dx_k.
 \end{align*}
\end{lemma}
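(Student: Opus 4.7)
The natural route is to use the known closed‐form expansion of the log‐Laplace (equivalently log probability generating) functional of a DPP in terms of the correlation kernel $C$, and then match it coefficient–by–coefficient with the expansion given in Proposition~\ref{power expansion functional}. Concretely, for a DPP with kernel $C$ satisfying $\K(\rho)$, a standard fact (valid for trace-class kernels, which is our case since $C$ is continuous on $\R^d$ and $A$ is bounded, so that the integral operator $C_A$ on $L^2(A)$ with kernel $C(x-y)$ is Hermitian, positive, and has finite trace $\rho\,|A|$) is the Fredholm determinant identity
\[
G_{\X}\bigl(1-s\,\1_{\{\cdot\in A\}}\bigr) \;=\; \mathrm{Det}\bigl(I - s C_A\bigr),
\]
for all $s\in[0,1]$.

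The plan is then, first, for $s\ge 0$ small enough (so that $s\|C_A\|<1$), to use $\log\mathrm{Det}(I - sC_A) = \mathrm{tr}\,\log(I-sC_A)$ and expand the logarithm to obtain, as a convergent series in $s$,
\[
\log G_{\X}\bigl(1 - s\,\1_{\{\cdot\in A\}}\bigr) \;=\; -\sum_{k=1}^{\infty} \frac{s^k}{k}\,\mathrm{tr}\bigl(C_A^{\,k}\bigr).
\]
Second, identify $\mathrm{tr}(C_A^k)$ by iterating the kernel:
\[
\mathrm{tr}\bigl(C_A^{\,k}\bigr) \;=\; \int_{A^k} C(x_1 - x_2)\, C(x_2-x_3)\cdots C(x_{k-1}-x_k)\, C(x_k - x_1)\, dx_1\cdots dx_k,
\]
which by the symmetry $C(-u)=C(u)$ coincides with the cyclic product $C(x_2-x_1)C(x_3-x_2)\cdots C(x_1-x_k)$ appearing in the claim. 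Third, apply Proposition~\ref{power expansion functional} with $\eta = \1_{\{\cdot\in A\}}$ to get the asymptotic expansion
\[
\log G_{\X}\bigl(1 - s\,\1_{\{\cdot\in A\}}\bigr) \;=\; \sum_{j=1}^{N} \frac{(-s)^j}{j!}\,\gamma_{[j]}(A^j) + o(s^N)
\]
as $s\downarrow 0$, and compare, for each fixed $k\ge 2$, the coefficient of $s^k$ in the two expansions. This yields
\[
\frac{(-1)^k}{k!}\,\gamma_{[k]}(A^k) \;=\; -\frac{1}{k}\,\mathrm{tr}(C_A^{\,k}),
\]
which rearranges exactly to the stated formula
\[
\gamma_{[k]}(A^k) \;=\; (-1)^{k+1}(k-1)! \int_{A^k} C(x_2-x_1)\cdots C(x_1-x_k)\, dx_1\cdots dx_k.
\]

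The only genuine care is in justifying the Fredholm-determinant identity and the validity of the trace expansion. Under $\K(\rho)$ the operator $C_A$ is self-adjoint, positive, compact and trace class (its eigenvalues lie in $[0,1]$ by Bochner/Plancherel and $\mathrm{tr}(C_A)=\rho|A|<\infty$); Lidskii's theorem then gives $\mathrm{tr}\log(I - sC_A)=\sum_i\log(1-s\lambda_i)$, which converges for $s<\|C_A\|^{-1}$ and matches the series $-\sum_k s^k\,\mathrm{tr}(C_A^k)/k$. This is the main technical point; once it is in place, matching coefficients is immediate, and the final identity extends to arbitrary bounded measurable $A$ because both sides are continuous functions of $s$ in a right neighborhood of $0$ and agree there as formal power series (the left side being a polynomial in $s$ only up to the asymptotic error in Proposition~\ref{power expansion functional}, which is enough to identify the individual coefficients $\gamma_{[k]}(A^k)$).
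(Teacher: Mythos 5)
Your proof is correct, but it follows a genuinely different route from the paper. The paper starts from the expansion of the \emph{log-Laplace} functional of a DPP given in \cite[Proposition 3.9]{ShiraiTakahashi1:03}; matching this with Proposition~\ref{power expansion functional} first yields a closed (but combinatorially involved) expression for the ordinary cumulant measures $\gamma_p(A^p)$, and the factorial cumulant measures are then recovered through Proposition~\ref{stirling fact cum vs cum}, i.e.\ via Stirling numbers of the first kind, with a final simplification using the multinomial identity \eqref{simplification multinomiale 1} and the falling-factorial identity \eqref{simplification multinomiale 2} to show that only the term $n=k$ survives. You instead work directly with the probability generating functional through the Fredholm determinant identity $G_\X(1-s\1_A)=\mathrm{Det}(I-sC_A)$, expand $\log\mathrm{Det}(I-sC_A)=-\sum_{k\ge1}s^k\,\mathrm{tr}(C_A^k)/k$ for small $s$, identify $\mathrm{tr}(C_A^k)$ with the cyclic integral, and match coefficients with the $\gamma_{[j]}$-expansion of $\log G_\X$ in Proposition~\ref{power expansion functional}; this reaches the factorial cumulants in one step and bypasses the Stirling-number bookkeeping entirely. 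The price is that you must justify the determinant identity and the trace expansion (trace-class property of $C_A$, eigenvalues in $[0,1]$, $\mathrm{tr}(C_A^k)$ as an iterated kernel integral), but these facts hold under $\K(\rho)$ and are of the same nature as the Mercer-expansion argument the paper itself invokes in the proof of Theorem~\ref{theoreme DPP brillinger}, so nothing essentially new is needed. In short: the paper's proof is more elementary (purely combinatorial on top of the cited Shirai--Takahashi expansion), while yours is shorter and more conceptual, making the spectral structure behind the formula explicit; your coefficient-identification step is legitimate since an asymptotic expansion as $s\to0$ determines its coefficients uniquely, exactly as used in the paper's own proof of Proposition~\ref{stirling fact cum vs cum}.
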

\begin{proof}
By~\cite[Proposition 3.9]{ShiraiTakahashi1:03}, we deduce that for any bounded set $A\subset \R^d$ and $s$ small enough,
 \begin{multline*}
 \log \left( L_\X (s \1_{\lbrace s\in A \rbrace}) \right) = \sum_{p=1}^{\infty} \frac{(-s)^p}{p!} \sum_{n=1}^p (-1)^{n+1} \sum_{\stackrel{p_1+\ldots+p_n=p}{p_1,\ldots,p_n \geq 1} }\frac{p!}{n \cdot p_1! p_2! \cdots p_n!}\\ \int_{A^n} C(x_2-x_1) \ldots C(x_1-x_n)dx_1\ldots dx_n.
\end{multline*}
Then, by Proposition~\ref{power expansion functional}, we have by the last equation that for all $p\in\N$ and any bounded set $A\subset \R^d$,
\begin{multline*}
\gamma_p(A^p) = \sum_{n=1}^p (-1)^{n+1} \sum_{\stackrel{p_1+\ldots+p_n=p}{p_1,\ldots,p_n \geq 1}  }\frac{p!}{n \cdot p_1! p_2! \cdots p_n!}\\ \int_{A^n}   C(x_2-x_1) \ldots C(x_1-x_n)  dx_1\ldots dx_n.
\end{multline*}
Thus, by Proposition~\ref{stirling fact cum vs cum}, we have for $k\geq 2$,
\begin{multline}\label{expression cumulant factorial}
 \gamma_{[k]}(A^k)= \sum_{p=1}^k (-1)^{k-p} D_{p,k} \sum_{n=1}^p (-1)^{n+1} \sum_{\stackrel{p_1+\ldots+p_n=p}{p_1,\ldots,p_n \geq 1}  }\frac{p!}{n \cdot p_1! p_2! \cdots p_n!} \\ 
       \hspace{1cm}  \int_{A^n}   C(x_2-x_1) \ldots C(x_1-x_n)  dx_1\ldots dx_n.
\end{multline}
By \cite[(24.1.2.I.B)]{stegun}, it is easily seen that 
\begin{align} \label{simplification multinomiale 1}
    \sum_{\stackrel{p_1+\ldots+p_n=p}{p_1,\ldots,p_n \geq 1}  }\frac{p!}{  p_1! p_2! \cdots p_n!}   &=    \sum_{p_1+\ldots+p_n=p }\frac{p!}{  p_1! p_2! \cdots p_n!} - \sum_{p_1+\ldots+p_{n-1}=p }\frac{p!}{ p_1! p_2! \cdots p_{n-1}!}\notag \\ 
          &= n^p - (n-1)^p.
\end{align}
By definition
\begin{align}\label{simplification multinomiale 2}
 \sum_{p=1}^k (-1)^{k-p} D_{p,k} (n^p - (n-1)^p) = n^{[k]} - (n-1)^{[k]}
\end{align}
which is null for every $n<k$. Therefore, by~\eqref{simplification multinomiale 1} and~\eqref{simplification multinomiale 2}, only the terms $n=k$ is non null in the sum~\eqref{expression cumulant factorial}.
\end{proof}

We are now in position to prove our main result.


\begin{theorem}\label{theoreme DPP brillinger}
A DPP with kernel verifying the condition $\K(\rho)$, for a given $\rho>0$, is Brillinger mixing.
\end{theorem}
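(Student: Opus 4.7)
The plan is to use Lemma~\ref{lemme egalite cumulant K} to get a closed-form density for $\gamma_{[k]}$, reduce it by stationarity, and then bound the resulting total variation via Fourier analysis. First, Lemma~\ref{lemme egalite cumulant K} identifies the density of $\gamma_{[k]}$ on $\R^{dk}$ as $(-1)^{k+1}(k-1)!\,C(x_2-x_1) C(x_3-x_2)\cdots C(x_1-x_k)$, a cyclic product of $k$ factors of $C$. By Definition~\ref{definition reduced measure} and the stationarity of the DPP, translating by $-x_k$ (set $y_i := x_i - x_k$ for $i = 1, \ldots, k-1$) yields the density of $\gamma^{red}_{[k]}$ on $\R^{d(k-1)}$ as $(-1)^{k+1}(k-1)!\,C(y_1) C(y_2-y_1)\cdots C(-y_{k-1})$.

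Taking absolute values and changing variables via $u_1 := y_1$ and $u_i := y_i - y_{i-1}$ for $i \geq 2$ (so $u_k = -(u_1+\cdots+u_{k-1})$ closes the cycle), the total variation reads
\[
|\gamma^{red}_{[k]}|(\R^{d(k-1)}) = (k-1)!\int_{\R^{d(k-1)}} \prod_{i=1}^{k-1} |C(u_i)|\cdot |C(u_1+\cdots+u_{k-1})|\,du = (k-1)!\,|C|^{*k}(0),
\]
the $k$-fold self-convolution of $|C|$ evaluated at the origin. The problem thus reduces to showing $|C|^{*k}(0)<\infty$ for every $k\geq 2$.

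For $k=2$, this is immediate from $\K(\rho)$ and Plancherel: $|C|^{*2}(0)=\|C\|_2^2 = \int \F(C)(t)^2\,dt \leq \int \F(C)(t)\,dt = C(0)=\rho$, where I have used $0\leq \F(C)\leq 1$ and that $\F(C)\in L^1$ with integral $\rho$ (by Bochner, since $\F(C)\geq 0$ and $C$ is continuous). For $k\geq 3$, my plan is to attempt an inductive bound exploiting that $|C|^{*2}$ is a positive-definite function (since $\F(|C|^{*2})=\F(|C|)^2\geq 0$) bounded by $|C|^{*2}(0)\leq \rho$, and iterating splittings such as $|C|^{*(k+2)}(0)=\int |C|^{*k}(y)\,|C|^{*2}(y)\,dy$, combined with Cauchy--Schwarz and the Plancherel identity $|C|^{*(2m)}(0) = \int \F(|C|)(t)^{2m}\,dt$, to relate the higher-order convolutions at $0$ to $\int \F(|C|)^k\,dt$.

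The hard part is precisely this last step. Condition~$\K(\rho)$ does \emph{not} imply $C\in L^1$ (for example the Jinc kernel \eqref{jinc} satisfies $\K(\rho)$ but lies only in $L^p$ for $p>2d/(d+1)$), so $\F(|C|)$ is not guaranteed to be bounded and one cannot simply invoke Young's or Hausdorff--Young to conclude. The argument must therefore combine the $L^2$ bound $\|C\|_2^2\leq \rho$, the $L^\infty$ bound $\|C\|_\infty = C(0)=\rho$ coming from positive definiteness, and the integrability of $\F(C)$ with $\int \F(C)=\rho$, in a subtle enough way to close the induction at every order $k$ without requiring $L^1$-integrability of $C$.
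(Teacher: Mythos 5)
Your argument is complete only for $k=2$; for $k\geq 3$ you describe a plan (iterated Cauchy--Schwarz and Plancherel bounds on $|C|^{*k}(0)$) but do not carry it out, and this is exactly where the whole difficulty sits, so there is a genuine gap. Moreover, the reduction itself is a dead end under $\K(\rho)$ alone: replacing the signed density by $|C|$-products at the very first step discards the oscillation of the kernel, and the three facts you propose to combine ($\|C\|_\infty=C(0)=\rho$, $\|C\|_2^2\leq\rho$, $\F(C)\geq 0$ with $\int\F(C)=\rho$) do not imply $|C|^{*k}(0)<\infty$. Concretely, the kernel \eqref{jinc} satisfies all three in every dimension, yet $|C(x)|$ is comparable to $|x|^{-(d+1)/2}$ on a set of positive density, and a dyadic-annulus count (the region $|u|\asymp|v|\asymp|u-v|\asymp R$ has volume $\asymp R^{2d}$, on a positive fraction of which the integrand is $\gtrsim R^{-3(d+1)/2}$) shows that $\int |C(u)|\,|C(v)|\,|C(u-v)|\,du\,dv$ diverges as soon as $d\geq 3$. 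Hence no inequality derivable from the bounds you list can close your induction at $k=3$ in general dimension. Two smaller points: Lemma~\ref{lemme egalite cumulant K} gives $\gamma_{[k]}$ only on cubes $A^k$, so identifying its density (a sum over the $(k-1)!/2$ distinct cycles, compare \eqref{expression densite cumulant 3}--\eqref{expression densite cumulant 4}) needs justification, and for $k\geq 4$ your displayed formula for the total variation is only an upper bound obtained by the triangle inequality over cycles, not an equality.

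The paper's proof is structured precisely so as never to integrate $|C|$. It keeps the signed cyclic integral over a bounded window, $I_k(t)=\int_{[-t,t]^{dk}}C(x_2-x_1)\cdots C(x_1-x_k)\,dx_1\ldots dx_k$, recognizes it via Mercer's theorem as $\sum_j\lambda_j(t)^k$, the trace of the $k$-th power of the integral operator with kernel $C$ restricted to $[-t,t]^d$, and uses the spectral bound $0\leq\lambda_j(t)\leq 1$ (coming from $0\leq\F(C)\leq 1$) to get $0\leq I_k(t)\leq I_1(t)=\rho\,(2t)^d$. Through Lemma~\ref{lemme egalite cumulant K} and Definition~\ref{definition reduced measure} this yields $\bigl|\int_{[-t,t]^d}\gamma^{red}_{[k]}\bigl(([-t,t]^{d}-x)^{k-1}\bigr)dx\bigr|=O(t^d)$, and the conclusion is then reached by a Hahn--Jordan plus positivity/monotonicity argument on the nested cubes $[-t/2,t/2]^d\subset[-t,t]^d-x$, which upgrades this $O(t^d)$ window bound to finiteness of $\gamma^{\pm red}_{[k]}(\R^{d(k-1)})$. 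Your $k=2$ computation is the scalar shadow of the key inequality $\lambda^k\leq\lambda$ for $\lambda\in[0,1]$; the paper's point is to exploit that inequality at the operator level for every $k$, which is exactly what your absolute-value reduction forfeits.
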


\begin{proof}
For any  $t>0$, we have by taking $f=\1_{[-t,t]^d}$ in Definition~\ref{definition reduced measure},
\begin{align*}
   \gamma_{[k]}([-t,t]^{dk})   &= \int_{\R^d} \1_{\lbrace x\in [-t,t]^d \rbrace}  \gamma^{red}_{[k]} \left( \left( [-t,t]^{d}-x \right)^{k-1} \right) dx.
   \end{align*}
By Lemma~\ref{lemme egalite cumulant K} 
\begin{align}\label{egalite cumulant integrale C}
\int_{\R^d}  \1_{\lbrace x\in [-t,t]^d \rbrace}  \gamma^{red}_{[k]} \left( \left( [-t,t]^{d}-x \right)^{k-1} \right) dx = (-1)^{k+1} (k-1)!\  I_k(t) 
\end{align}
where for all $k\geq 1$ and $t>0$, $I_k(t):= \int_{[-t,t]^{dk}}   C(x_2-x_1)\ldots C(x_1-x_k)  dx_1\ldots dx_k$.
Since $C$ verifies the condition $\K(\rho)$, by  Mercer's theorem, see also \cite[Section 2.3]{lavancier_extended}, we have for all $t>0$,
\begin{align*}
 C(x-y) = \sum_{j\in \N} \lambda_j(t) \phi_j(x) \phi_j(y),\quad \forall (x,y) \in [-t,t]^d,
\end{align*}
where for all $j\in \N$, $\lbrace \phi_j\rbrace_{j\in \N}$ is an orthonormal basis of $L^2\left( [-t,t]^d \right)$ and  $\lambda_j(t)$ belongs to $[0,1]$ by  \cite[Theorem 4.5.5]{macchi1975coincidence}. 
 Then, by orthogonality of the basis $\lbrace \phi_j\rbrace_{j\in \N}$, we have for all $t>0$ and $k\geq 1$,
\begin{align}\label{egalite integral operateur}
  I_k(t) =\sum_{ j\in \N} \lambda^k_j(t) \leq  \sum_{ j\in \N} \lambda_j(t) = I_1(t)
\end{align}
where $I_1(t) =   \int_{[-t,t]^{d}}   \rho dx = O(t^d)$. Thus, by Theorem~\ref{theorem hahn jordan decomposition}, \eqref{egalite cumulant integrale C} and \eqref{egalite integral operateur}, there exists a constant $\kappa>0$ and $T>0$ such that for all $t\geq T$,
\begin{multline}\label{egalite 2 expression cumulant short}
 \left| \int_{\R^d}  \1_{\lbrace x\in [-t,t]^d \rbrace}\left[  \gamma^{+red}_{[k]} \left( \left( [-t,t]^{d}-x \right)^{k-1} \right)-  \gamma^{-red}_{[k]} \left( \left( [-t,t]^{d}-x \right)^{k-1} \right) \right] dx \right|\leq \kappa t^d.
\end{multline}
Henceforth, we assume $t\geq T$. By Theorem~\ref{theorem hahn jordan decomposition}, at least one of the measure $\gamma^{+red}_{[k]}$ or $\gamma^{-red}_{[k]}$ is finite. Let us assume without loss of generality that $\gamma^{-red}_{[k]}$ is finite. Thus, by \eqref{egalite 2 expression cumulant short} and the monotonicity of the measure $\gamma^{-red}_{[k]}$, we have
\begin{align}\label{majoration c+}
 \int_{[-t,t]^d} \gamma^{+red}_{[k]} \left( \left( [-t,t]^{d}-x \right)^{k-1} \right) dx\leq t^d \left( \kappa + 2^d\gamma^{-red}_{[k]} ((\R^d)^{k-1} ) \right),
\end{align}
so by positivity of $\gamma^{+red}_{[k]}$,
\begin{align}\label{utilisation positivite mesure}
\int_{ \left[ \frac{-t}{2}, \frac{t}{2} \right]^d} \gamma^{+red}_{[k]} \left( \left( [-t,t]^{d}-x \right)^{k-1} \right) dx  \leq   \int_{[-t,t]^d}  \gamma^{+red}_{[k]}\left( \left( [-t,t]^{d}-x \right)^{k-1} \right)dx.
\end{align}
Further, for all $ (x,y)\in \left[ \frac{-t}{2}, \frac{t}{2} \right]^{2d}$, $y+x \in [-t,t]^{d}$, so for all $x \in \left[ \frac{-t}{2}, \frac{t}{2} \right]^d$ we have $\left[ \frac{-t}{2}, \frac{t}{2} \right]^d \subset [-t,t]^d -x$.
It follows by~\eqref{utilisation positivite mesure} and the monotonicity of $\gamma^{+red}_{[k]}$ that
\begin{align}\label{utilisation monotonicity mesure}
 \int_{ \left[ \frac{-t}{2}, \frac{t}{2} \right]^d} \gamma^{+red}_{[k]}\left( \left[ \frac{-t}{2}, \frac{t}{2} \right]^{d(k-1)}\right)dx  \leq \int_{ \left[ \frac{-t}{2}, \frac{t}{2} \right]^d} \gamma^{+red}_{[k]} \left( \left( [-t,t]^{d}-x \right)^{k-1} \right) dx .
\end{align}
Hence by \eqref{majoration c+}-\eqref{utilisation monotonicity mesure}, we have
\begin{align*}
 \gamma^{+red}_{[k]} \left(\left[\frac{-t}{2},\frac{t}{2}\right]^{d(k-1)}\right)  \leq  \left( \kappa + 2^d\gamma^{-red}_{[k]} (\R^{d(k-1)} ) \right).
\end{align*}
By letting $t$ tend to infinity in the last equation, we see that $\gamma^{+red}_{[k]}$ is finite and so is $\left|\gamma^{red}_{[k]}\right|$ by Definition~\ref{definition total variation}, which concludes the proof. 
\end{proof}

\section{Statistical applications}\label{section statistical application brillinger}

Many applications of the Brillinger mixing property for point processes may be found in~\cite{heinrichstellaintegrated}, \cite{StellaKleinempirical2011}, \cite{HeinrichProkesova:10}, \cite{jolivetuppermoment} and \cite{jolivet88momentestimation}. We present in this section some of these applications for DPPs. We prove in Section~\ref{functional} a general central limit theorem for certain functionals of a DPP  that are involved in the asymptotic properties of standard estimators. As an example, we apply this result to the  estimator of the intensity of a DPP. Another important application concerns the asymptotic behavior of minimum contrast estimators for parametric DPPs, which will be the subject of a separate paper.  In Section~\ref{pcf}, we obtain the asymptotic  properties of the kernel estimator of the pcf of a DPP. In particular, we prove a central limit theorem for the pointwise estimator of the pcf and for its integrated squared error.

\subsection{Asymptotic behaviour of functionals of order \emph{p}}\label{functional}

We present an important consequence of the Brillinger mixing property, namely a central limit theorem 
for a wide class of functionals of the point process 
and the convergence of their moments. 
%
A first theorem was mentioned in~\cite{krickeberg1980} and proved in~\cite{jolivetTCL}. We present here a more general version that yields in particular the asymptotic normality of standard statistics as the natural estimator of the intensity of the process. These results apply to stationary DPPs under condition $\K(\rho)$ as explained and exemplified at the end of this section. 

\medskip

For a given set $D$ of $\R^d$, we denote by $\partial D$  the boundary of $D$.
\begin{definition}\label{definition regular set}
 A sequence of subsets $\lbrace D_n\rbrace_{n\in \N}$ of $\R^d$ is called regular if for all $n\in\N$, $D_n\subset D_{n+1}$, $D_n$ is compact, convex and there exist constants $\alpha_1$ and $\alpha_2$ such that
 \begin{align*}
  \alpha_1 n^d &\leq \Dn \leq \alpha_2 n^d, \\
  \alpha_1 n^{d-1} &\leq \mathcal{H}_{d-1} \left( \partial D_n \right) \leq \alpha_2 n^{d-1}
 \end{align*}
 where $\mathcal{H}_{d-1}$ is the $(d-1)$-dimensional Hausdorff measure.
\end{definition}
Note that any sequence of subsets as above grows to $\R^d$ in all directions. 
For $p \geq 1$, let $f_D$ be a function from $\R^{dp}$ into $\R$ that depends on a given set $D \subset \R^d$ and define for a stationary point process $\X$,
 \begin{align*}
   N_p\left(f_D \right) := \sum_{ (x_1,\ldots,x_p) \in \X^p} f_{D}(x_1,\ldots,x_p).
 \end{align*}
By letting the set $D$ in the last equation be a sequence of regular subsets $\lbrace D_n \rbrace_{n\in\N}$, we have under some suitable conditions on the function $f_{D_n}$, the following central limit theorem on the sequence $\left\lbrace N_p\left(f_{D_n}\right) \right\rbrace_{n\in\N}$. The proof is postponed to Section~\ref{proof tcl jolivet}.

\begin{proposition}\label{TCL jolivet modifie}
Let $\lbrace D_n\rbrace_{n\in \N}$ and $\lbrace \widetilde{D}_n \rbrace_{n\in\N}$ be two sequences of regular sets in the sense of Definition~\ref{definition regular set} such that $\frac{|\widetilde{D}_n|} {|D_n|}\xrightarrow{n \rightarrow + \infty} \kappa$ for a given $\kappa >0$. 
Assume that there exists  a bounded and compactly supported function $F$ from $\R^{d(p-1)}$ into $\R^+$ such that for all $n\in \N$ and $(x_1,\ldots,x_p) \in \R^{dp}$,
 \begin{align}\label{majoration ft cumulant}
  |f_{D_n}(x_1, \ldots, x_p)| \leq \frac{ 1}{|\widetilde{D}_n|}  \1_{\lbrace x_1\in D_n \rbrace} F(x_2-x_1,\ldots,x_p-x_1) .
 \end{align}
Assume further that the point process $\X$ is ergodic, admits moment of any order and is Brillinger mixing in the sense of Definition~\ref{definition brillinger}. Then, for all $k\geq 2$, we have
 \begin{align}\label{comportement asymptotic cumulant}
\cum_k \left( \sqrt{\Dn}  N_p\left(f_{D_n} \right) \right) =  O \left( \Dn^{1- \frac{k}{2}} \right).
 \end{align}
 Moreover, if  there exists $\sigma>0$ such that 
 \begin{align}\label{condition limite variance tcl jolivet}
  \mathrm{Var}  \left(\sqrt{\Dn} N_p\left(f_{D_n}\right)\right) \xrightarrow[n\rightarrow +\infty]{} \sigma^2,
 \end{align}
we have the convergence 
  \begin{align}\label{convergence tcl jolivet}
\sqrt{\Dn}\left[  N_p\left(f_{D_n}\right) - \E\left( N_p \left(f_{D_n} \right) \right) \right] \convl \mathcal{N}(0,\sigma^2)
 \end{align}
 and the convergence of all moments to the corresponding moments of $\mathcal{N}(0,\sigma^2)$.
\end{proposition}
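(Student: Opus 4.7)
The plan is to use the method of cumulants: the heart of the argument is the cumulant bound~\eqref{comportement asymptotic cumulant}, and the convergence in distribution~\eqref{convergence tcl jolivet} together with the convergence of moments will then follow by a classical soft argument, using~\eqref{condition limite variance tcl jolivet} to identify the limiting variance and Proposition~\ref{stirling fact cum vs cum} to turn cumulant bounds into moment bounds.

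For~\eqref{comportement asymptotic cumulant}, the first step is a combinatorial identity, in the spirit of~\cite{jolivetTCL}, expressing $\cum_k\!\left(N_p(f_{D_n})\right)$ as a finite sum, indexed by the indecomposable partitions $\pi=\{B_1,\ldots,B_j\}$ of the $k\times p$ index set $\{1,\ldots,k\}\times\{1,\ldots,p\}$, of integrals of the form
\begin{align*}
\int \Big(\prod_{i=1}^k f_{D_n}(y_{i,1},\ldots,y_{i,p})\Big)\prod_{\ell=1}^j \gamma_{[|B_\ell|]}\Big(\prod_{(i,m)\in B_\ell} dy_{i,m}\Big),
\end{align*}
where variables within the same block of $\pi$ are identified. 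This identity is obtained by plugging the moment-to-cumulant inversion into Definition~\ref{definition cumulant measure} and converting from ordinary to factorial moment measures via Proposition~\ref{stirling fact cum vs cum}.

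The second step is to bound each such term using Brillinger mixing. By stationarity, each factor $\gamma_{[|B_\ell|]}$ admits the reduced representation of Definition~\ref{definition reduced measure}; after selecting one reference coordinate in the whole integrand and translating so that it becomes a free integration variable, all the remaining coordinates are integrated against the reduced measures $\gamma^{red}_{[|B_\ell|]}$, whose total variations are finite by the Brillinger mixing hypothesis. The bound~\eqref{majoration ft cumulant} then pulls out the prefactor $|\widetilde{D}_n|^{-k}$, the indicator $\1_{\{x_1\in D_n\}}$ confines the free reference coordinate to $D_n$ and yields a factor $\Dn$, and the compact support and boundedness of $F$ control the remaining integrals against the finite signed measures. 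Since $|\widetilde{D}_n|\sim \kappa \Dn$, this produces $|\cum_k(N_p(f_{D_n}))|=O\bigl(\Dn/|\widetilde{D}_n|^k\bigr)=O(\Dn^{1-k})$, and~\eqref{comportement asymptotic cumulant} follows after multiplication by $\Dn^{k/2}$.

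Once the cumulant estimate is available, the rest is standard: cumulants of order $k\geq 3$ of the centered and rescaled statistic vanish, the second cumulant converges to $\sigma^2$ by~\eqref{condition limite variance tcl jolivet}, and the limit is therefore Gaussian; since the normal law is characterized by its moments and these are polynomial in the cumulants via Proposition~\ref{stirling fact cum vs cum}, uniform cumulant bounds also yield uniform moment bounds and convergence of all moments to those of $\mathcal{N}(0,\sigma^2)$. I expect the main obstacle to be the combinatorial accounting underlying the partition expansion, in particular verifying that exactly one volume factor $\Dn$ is extracted uniformly over the indecomposable partitions $\pi$; once this is done, Brillinger mixing supplies precisely the finite total variations needed to bound every term.
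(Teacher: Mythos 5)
Your overall route is the paper's route: reduce everything to the cumulant bound \eqref{comportement asymptotic cumulant}, expand $\cum_k\left(N_p(f_{D_n})\right)$ over \emph{indecomposable} partition terms in the spirit of \cite{jolivetTCL}, dominate each term via \eqref{majoration ft cumulant} to pull out $|\widetilde{D}_n|^{-k}$, use Brillinger mixing (finite total variation of the reduced cumulant measures) plus the compact support of $F$ to show each indecomposable term contributes a single volume factor $O(\Dn)$, and conclude by the cumulant method (the paper invokes \cite[Theorem 1]{jansonTCLcumulant}, which also gives the moment convergence directly). Two points, however, deserve attention. First, the combinatorial identity as you wrote it is not correct: a single partition $\pi$ of $\{1,\ldots,k\}\times\{1,\ldots,p\}$ with blocks integrated against $\gamma_{[|B_\ell|]}$ \emph{and} with the variables of a block identified is inconsistent (an identified block cannot be integrated against a $d|B_\ell|$-dimensional measure), and if one drops the identification the diagonal contributions are lost — already for $p=1$, $k=2$ your formula yields only $\int f\otimes f\,d\gamma_{[2]}$ and misses the term $\rho\int f^2$, cf.\ Lemma~\ref{calcul variance intensite}. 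The correct expansion, which the paper takes from \cite{jolivetTCL} (Formulas \eqref{moment Phi shorten form}--\eqref{moment Phi} and Theorem~\ref{expression cumulant jolivet}), has a two-level structure: a partition $\Pi_l$ of the $kp$ indices encoding which of the (not necessarily distinct) points coincide, and then a partition $\chi_s^l$ of the resulting $l$ variables into blocks integrated against the \emph{ordinary} cumulant measures $\gamma_{|q_r|}$; indecomposability refers to this combined structure, and the link with the Brillinger hypothesis (stated for the factorial cumulant measures) goes through relations as in Proposition~\ref{stirling fact cum vs cum}. Second, the step you yourself single out as the main obstacle — that indecomposability forces exactly one free volume factor, uniformly over the admissible partitions — is precisely what the paper does not redo but imports from \cite[Section 4, Theorem 3]{jolivetTCL} (see \eqref{result jolivet asymptotic cumulant}); your sketch (fix a reference coordinate in $D_n$, then reach every other variable through either the compact support of $F$ within a factor $f_{D_n}$ or a finite-total-variation reduced measure within a block) is the right mechanism, but it is exactly the connectivity argument that still has to be written out, so as it stands your proposal asserts rather than proves the key estimate.

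Apart from these caveats the soft part is fine: for $k\geq 3$ the centered, rescaled cumulants are $O(\Dn^{1-k/2})\to 0$, the second cumulant converges by \eqref{condition limite variance tcl jolivet}, and normal convergence together with convergence of all moments follows from the standard moment--cumulant relations (note that Proposition~\ref{stirling fact cum vs cum}, which concerns measures, is not quite the right citation for that scalar step; the paper simply appeals to \cite{jansonTCLcumulant}).
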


By~\eqref{comportement asymptotic cumulant}, the variance given in~\eqref{condition limite variance tcl jolivet} is uniformly bounded with respect to $n\in\N$.
If $D_n$ and $f_{D_n}$ in~\eqref{condition limite variance tcl jolivet} are sufficiently generic, the convergence~\eqref{condition limite variance tcl jolivet} of the variance holds true. However, in the general case, it must be assumed. To check~\eqref{condition limite variance tcl jolivet} in applications, it is convenient to express the  variance in~\eqref{condition limite variance tcl jolivet} in terms of the factorial cumulant moment measures of $\X$.  In appendix, we detail this expression for the important situations $p=1$ and $p=2$ with $f_{D_n}(x_1,x_2) = 0$ for $x_1\neq x_2$, see Lemmas~\ref{calcul variance intensite}, \ref{calcul variance general} and \ref{calcul covariance 3 termes general}. 

\medskip

%
%
%

Proposition~\ref{TCL jolivet modifie} applies to stationary DPPs with kernel verifying $\K(\rho)$ provided \eqref{majoration ft cumulant} is verified. Indeed, Soshnikov in~\cite{Soshnikov:00} proved that a stationary DPP is ergodic. Moreover, a DPP admits moments of any order by definition and is Brillinger mixing under condition $\K(\rho)$ by Theorem~\ref{theoreme DPP brillinger}. 
 As a direct application  when $p=1$, we retrieve a result of \cite{SoshnikovGaussianLimit} giving the asymptotic normality of the estimator of the intensity of a DPP. 

\begin{corollary}\label{corollary convergence normal intensite DPP}
 Let $\X$ be a DPP with kernel verifying $\K(\rho)$ for a given $\rho>0$  and  $\lbrace D_n\rbrace_{n\in \N}$ be a family of regular sets. Define for all $n\in\N$,
\begin{align}\label{rhoest}
 \widehat{\rho}_n =\frac{1}{|D_n|} \sum_{x\in \X} \1_{\lbrace x \in D_n\rbrace}.
\end{align}
We have the convergence
\begin{align*}
\sqrt{\Dn} \left(  \wrho - \rho \right) \convl N(0,\sigma^2)
\end{align*}
where $\sigma^2 =  \lim_{n\rightarrow +\infty} Var  \left(\sqrt{\Dn} \wrho \right) =\rho -\int_{\R^d} C(x)^2 dx$.
\end{corollary}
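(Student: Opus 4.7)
The plan is to specialize Proposition~\ref{TCL jolivet modifie} to the case $p=1$ with $f_{D_n}(x)=\frac{1}{|D_n|}\1_{\{x\in D_n\}}$, so that $N_1(f_{D_n})=\wrho$. First I would verify the domination hypothesis~\eqref{majoration ft cumulant}: choose $\widetilde D_n=D_n$ (so $\kappa=1$), and note that for $p=1$ the function $F$ is just a constant on $\R^0$, so the bound is trivially met with $F\equiv 1$. The structural hypotheses of the proposition hold on the process side: a stationary DPP under $\K(\rho)$ is ergodic by Soshnikov~\cite{Soshnikov:00}, admits moments of any order by Definition~\ref{DPPdefinition intro}, and is Brillinger mixing by Theorem~\ref{theoreme DPP brillinger}.

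Next I would identify the limiting variance. By stationarity and Campbell's formula $\E(\wrho)=\rho$, and the second factorial moment density of a DPP reads
\begin{align*}
\rho^{(2)}(x,y)=\det\begin{pmatrix}C(0)&C(x-y)\\C(y-x)&C(0)\end{pmatrix}=\rho^{2}-C(x-y)^{2}.
\end{align*}
A direct expansion of $\mathrm{Var}(\X(D_n))$ in terms of $\rho^{(1)}$ and $\rho^{(2)}$ then gives
\begin{align*}
\mathrm{Var}\bigl(\sqrt{\Dn}\,\wrho\bigr)=\frac{1}{\Dn}\,\mathrm{Var}(\X(D_n))=\rho-\frac{1}{\Dn}\int_{D_n\times D_n}C(x-y)^{2}\,dx\,dy.
\end{align*}
Performing the change of variables $u=x-y$ and applying Fubini together with dominated convergence, using $C\in L^{2}(\R^{d})$ from condition $\K(\rho)$, the last integral divided by $\Dn$ converges to $\int_{\R^{d}}C(u)^{2}\,du$. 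Hence~\eqref{condition limite variance tcl jolivet} holds with $\sigma^{2}=\rho-\int_{\R^{d}}C(x)^{2}\,dx$; this computation is essentially the content of Lemma~\ref{calcul variance intensite} in the appendix, which I would cite.

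Finally, applying Proposition~\ref{TCL jolivet modifie} yields the stated convergence $\sqrt{\Dn}(\wrho-\rho)\convl\mathcal{N}(0,\sigma^{2})$. Note that $\sigma^{2}\geq 0$ is automatic: by Plancherel and $0\le\F(C)\le 1$,
\begin{align*}
\int_{\R^{d}}C(x)^{2}\,dx=\int_{\R^{d}}\F(C)(t)^{2}\,dt\leq\int_{\R^{d}}\F(C)(t)\,dt=C(0)=\rho,
\end{align*}
and strictness of the inequality (needed for a non-degenerate limit) holds as soon as $\F(C)$ is not $\{0,1\}$-valued, which is the typical situation. The main obstacle is essentially bookkeeping: verifying the domination~\eqref{majoration ft cumulant} in the $p=1$ regime where~$F$ has an empty argument list, and carefully justifying the passage to the limit in the variance formula via an $L^{2}$-based dominated-convergence argument; both are routine given the framework assembled in the preceding sections.
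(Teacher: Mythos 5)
Your proposal matches the paper's own proof: the paper also proves this corollary by taking $p=1$ and $f_{D_n}(x)=\frac{1}{\Dn}\1_{\lbrace x\in D_n\rbrace}$ in Proposition~\ref{TCL jolivet modifie}, with the variance condition~\eqref{condition limite variance tcl jolivet} checked via Lemma~\ref{calcul variance intensite} and the same straightforward limit computation using $C\in L^2(\R^d)$. Your additional remarks (the trivial domination with $F\equiv 1$, $\widetilde D_n=D_n$, and the Plancherel bound showing $\sigma^2\geq 0$) are correct and only supplement what the paper leaves implicit.
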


The proof of this corollary follows by taking $p=1$ and $f_{D_n}(x)= \frac{1}{\Dn}\1_{\lbrace x\in D_n\rbrace}$ in Proposition~\ref{TCL jolivet modifie}. In this case, the assumption~\eqref{condition limite variance tcl jolivet} holds by Lemma~\ref{calcul variance intensite} and a straightforward calculus.

\subsection{Applications to the empirical pair correlation function.}\label{pcf}
We consider in this section the estimation of the pcf of a stationary and isotropic DPP in $\R^d$. In this setting $g(x,y)=g_0(r)$ depends only  on the Euclidean distance $r=|x-y|$. 
Let $\left\lbrace D_n \right\rbrace_{n\in \N}$ be a sequence of regular subsets of $\R^d$ in the sense of Definition~\ref{definition regular set},  $\lbrace b_n \rbrace_{  n\in\N   }$  a  sequence of positive real numbers, and  $k$ a function from $\R$ into $\R^+$. For  $n\in \N$ and $z\in\R^d$, we denote for short $D_n^{z} := D_n-z$ the translation of $D_n$ by $z$. For $r>0$, we consider the kernel estimator of $g_0(r)$
\begin{align}\label{formula estimator pcf partie brillinger}
  \gn(r) = \frac{1}{\sigma_d r^{d-1} \wrho^2} \sum_{\substack{(x,y) \in \X^2  \\ x \neq y}} \1_{\left\lbrace x\in D_n,\ y \in D_n\right\rbrace}   \ \frac{1}{b_n |D_n\cap D_n^{ x-y}|} k\left(\frac{r-|x-y|}{b_n}\right)
\end{align}
where $\wrho$ is given by \eqref{rhoest} and $\sigma_d= \frac{2\pi^{d/2}}{\Gamma\left(d/2 \right)}$ denotes the surface-area of the $d$-dimensional unit sphere. Some comments and details about this estimator may be found, for instance, in \cite[Section 4.3.5]{mollerstatisticalinference} or~\cite{heinrichasymptotic:13}.

The following proposition gives the asymptotic normality of the pointwise estimator $\gn(r)$ for $r>0$. Its proof, given in Section~\ref{section preuve clt gn}, is based on Proposition~\ref{TCL jolivet modifie}  and results from \cite{StellaKleinempirical2011} .

 \begin{proposition}\label{proposition convergence moment 1 et clt gn}
Let $\lbrace D_n \rbrace_{n\in\N}$ be a regular sequence of subsets of $\, \R^d$.  Assume that the sequence $\lbrace b_n\rbrace_{n\in\N}$ is such that $b_n^3 \Dn \rightarrow +\infty $ and $b_n^5 \Dn \rightarrow 0$. Let $k$ be a symmetric and bounded function with compact support included in $[-T,T]$, for a given $T>0$, and $\int_\R k(t) dt=  1$.
 Let $C$ be an isotropic twice differentiable kernel on $ \R^d\setminus \lbrace 0 \rbrace$ verifying $\K(\rho)$ for a given $\rho>0$.  
 Then, for all $r>0$, we have the convergence
  \begin{align*}
 \sqrt{b_n \Dn} \left( \gn(r) - g_0(r) \right) \convl N(0, \tau_r^2) 
 \end{align*}
 where $\tau_r^2 = 2 \rho^{-2} \frac{g_0(r)}{\sigma_d r^{d-1}} \sqrt{\int_{\R} k^2(t) dt} $.
 \end{proposition}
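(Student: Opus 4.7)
The plan is to decompose $\gn(r) - g_0(r)$ into a random-normalization error, a deterministic bias, and a centered stochastic term. Introduce $\widetilde g_n(r)$ as the estimator obtained by replacing $\wrho^2$ with $\rho^2$ in~\eqref{formula estimator pcf partie brillinger}, so that
\begin{align*}
\gn(r)-g_0(r) = \bigl(\widetilde g_n(r)-g_0(r)\bigr) + \bigl(\rho^2/\wrho^2-1\bigr)\widetilde g_n(r).
\end{align*}
Corollary~\ref{corollary convergence normal intensite DPP} gives $\wrho-\rho = O_P(\Dn^{-1/2})$ and hence $\rho^2/\wrho^2-1 = O_P(\Dn^{-1/2})$; combined with the tightness of $\widetilde g_n(r)$ (which follows from the variance analysis below), this makes the second term, after scaling by $\sqrt{b_n\Dn}$, an $O_P(\sqrt{b_n}) = o_P(1)$ since $b_n\to 0$. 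The problem thus reduces to proving the CLT for $\widetilde g_n(r)$.

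For the bias, Campbell's formula applied to $\rho^{(2)}(x,y)=\rho^2 g_0(|x-y|)$, the substitution $(x,y)\mapsto(x-y,y)$ (which cancels the normalising factor $|D_n\cap D_n^{x-y}|$ after integration in $y$), and a passage to spherical coordinates give
\begin{align*}
\E[\widetilde g_n(r)] = \frac{1}{r^{d-1}}\int_0^\infty \frac{s^{d-1}}{b_n}\,k\!\left(\frac{r-s}{b_n}\right) g_0(s)\,ds.
\end{align*}
Since $C\in C^2(\R^d\setminus\{0\})$, the map $s\mapsto s^{d-1}g_0(s) = s^{d-1}(1 - C^2(s\mathbf{e})/\rho^2)$ is twice differentiable at $r>0$ along any unit direction $\mathbf{e}$; substituting $t=(r-s)/b_n$, expanding to second order, and using the symmetry of $k$ (which kills the first-order term) shows $\E[\widetilde g_n(r)] - g_0(r) = O(b_n^2)$. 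The hypothesis $b_n^5\Dn\to 0$ then makes $\sqrt{b_n\Dn}\,|\E[\widetilde g_n(r)] - g_0(r)|=O(\sqrt{b_n^5\Dn})\to 0$, so the bias is negligible at the $\sqrt{b_n\Dn}$ scale.

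For the centered stochastic term I write $\widetilde g_n(r)=N_2(f_{D_n})$ with $f_{D_n}(x,y) = \mathbf{1}_{\{x,y\in D_n,\,x\neq y\}}\,k((r-|x-y|)/b_n)/(\sigma_d r^{d-1}\rho^2\,b_n\,|D_n\cap D_n^{x-y}|)$, then compute $\cum_k(\sqrt{b_n\Dn}\,\widetilde g_n(r))$ for each $k\geq 2$ via Proposition~\ref{stirling fact cum vs cum} as a finite sum of integrals of products of $f_{D_n}$ against the factorial cumulant moment measures $\gamma_{[j]}$ of the DPP. Brillinger mixing (Theorem~\ref{theoreme DPP brillinger}) bounds every total variation $|\gamma_{[j]}^{red}|(\R^{d(j-1)})$; the ``diagonal'' pair contribution, handled via Lemma~\ref{calcul variance general}, produces (after the same polar change of variable used for the bias) the limit $b_n\Dn\,\mathrm{Var}(\widetilde g_n(r))\to\tau_r^2$, while every remaining partition gives a contribution of order $O((b_n\Dn)^{1-k/2})$, which vanishes for $k\geq 3$ under $b_n\Dn\to\infty$. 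The method of moments in the spirit of Proposition~\ref{TCL jolivet modifie} then delivers the required Gaussian limit.

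I expect the main obstacle to be this last step: the kernel scaling concentrates $f_{D_n}$ on a spherical shell of width $b_n$ with peak value of order $b_n^{-1}$, so that $f_{D_n}$ cannot be dominated uniformly in $n$ by a single bounded, compactly supported function $F$, and hence Proposition~\ref{TCL jolivet modifie} cannot be quoted verbatim. Instead, a kernel-adapted bookkeeping along the lines of~\cite{StellaKleinempirical2011} is needed to track the joint effect of Brillinger mixing and of both bandwidth conditions $b_n^3\Dn\to\infty$ and $b_n^5\Dn\to 0$, and in particular to isolate the diagonal partition in the cumulant expansion that generates the factor $\int k^2$ inside $\tau_r^2$ while showing that every other partition contributes at strictly lower order.
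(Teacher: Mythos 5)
Your overall architecture (bias of order $b_n^2$ killed by $b_n^5\Dn\to0$, the $\wrho$-normalization killed by Corollary~\ref{corollary convergence normal intensite DPP} and $b_n\to0$, CLT for the centered term by the cumulant method) is the same as the paper's, and those two reduction steps are fine. The genuine gap is in the centered term, which is the heart of the matter: you correctly observe that your $f_{D_n}$, carrying the factor $1/b_n$, cannot be dominated as in \eqref{majoration ft cumulant} by an $n$-independent $F$, but you then conclude that Proposition~\ref{TCL jolivet modifie} is unusable and replace it by an unexecuted plan (``kernel-adapted bookkeeping along the lines of \cite{StellaKleinempirical2011}''), asserting without proof both the variance limit producing $\int k^2$ and the rate $O\bigl((b_n\Dn)^{1-k/2}\bigr)$ for all higher-order partition contributions. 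As written, nothing in the proposal actually establishes that the cumulants of order $k\geq3$ vanish, nor the limit of the variance, so the CLT for the centered term is not proved.

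The obstacle you flag has a simple resolution, which is exactly what the paper does: pull the factor $b_n$ out before invoking Proposition~\ref{TCL jolivet modifie}. Writing $b_n\,\sigma_d r^{d-1}\,\wrho^{\,2}\gn(r)=\sum^{\neq}f_{D_n}(x_1,x_2)$ with $f_{D_n}(x_1,x_2)=\1_{\{x_1,x_2\in D_n\}}\,k\bigl((r-|x_1-x_2|)/b_n\bigr)/|D_n\cap D_n^{x_1-x_2}|$ (no $1/b_n$), the compact support and boundedness of $k$ together with the inclusion $D_n^{\circleddash(r+T)}\subset D_n\cap D_n^{x_1-x_2}$ give the domination \eqref{majoration ft cumulant} with $\widetilde D_n=D_n^{\circleddash(r+T)}$ and $F(u)=M\1_{\{|u|\leq r+T\}}$, which are admissible. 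The bound \eqref{comportement asymptotic cumulant} then yields $\cum_k\bigl(\sqrt{\Dn}\,b_n\sigma_d r^{d-1}\wrho^{\,2}\gn(r)\bigr)=O(\Dn^{1-k/2})$, and by homogeneity of cumulants $\cum_k\bigl(\sqrt{b_n\Dn}\,\wrho^{\,2}\gn(r)\bigr)=O\bigl(b_n^{-k/2}\Dn^{1-k/2}\bigr)$, which tends to $0$ for every $k\geq3$ precisely because $b_n^{3}\Dn\to\infty$ --- this is where that hypothesis enters, and the cruder rate suffices, so no sharp partition bookkeeping is needed. The first two cumulants (mean and variance, giving the constant $\tau_r^2$) are not recomputed from scratch either: the paper imports them from \cite{StellaKleinempirical2011}, whose hypotheses are checked via the twice differentiability of $C$ (local Lipschitz continuity of $g_0'$) and Lemma~\ref{lemma condition cumulant heinrich} on the cumulant densities; the conclusion then follows from the cumulant method \cite{jansonTCLcumulant}, Slutsky, and $\wrho^{\,2}\to\rho^2$ a.s. To make your route complete you would have to supply the variance computation (e.g.\ via Lemma~\ref{calcul variance general} and the polar change of variables, isolating the term carrying $\int k^2$) and a rigorous bound on all higher cumulants, which is precisely the part you defer.
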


 
 
 \medskip
 
In addition to the previous result, we state the asymptotic normality of the integrated squared error of the estimator $\wrho^{\, 2} \gn$ where $\gn$ is defined in~\eqref{formula estimator pcf partie brillinger}. This quantity is the basis of an asymptotic goodness-of-fit test for stationary DPPs as presented in~\cite{heinrichstellaintegrated}. 
For all segment $I \subset \R^+ \setminus \lbrace 0\rbrace $ and $n\in \N$, denote $$\mathrm{ISE}_n(I) =\int_I \left( \wrho^{\,2} \gn(r) - \rho^2 g_0(r) \right)^2 dr.$$

\begin{proposition}\label{proposition erreur quadratique moyenne stella}
Let $\lbrace D_n \rbrace_{n\in\N}$ be a regular sequence of subsets of $\, \R^d$.     Assume that the sequence $\lbrace b_n\rbrace_{n\in\N}$ is such that  $b_n\rightarrow 0$ and $b_n \Dn \rightarrow +\infty $.  Let $k$ be a symmetric and bounded function with compact support included in $[-T,T]$, for a given $T>0$, and $\int_\R k(t) dt=  1$.
 Let $C$ be an isotropic twice differentiable kernel on $ \R^d\setminus \lbrace 0 \rbrace$ verifying $\K(\rho)$ for a given $\rho>0$.  Then, for all segment $I\subset \R^+ \setminus \lbrace 0 \rbrace$, we have as $n$ tends to infinity,
 \begin{align*}
b_n \Dn \E\left( \mathrm{ISE}_n(I) \right) = 2 \rho^2 \int_I \frac{g_0(r)}{ \sigma_d r^{d-1} } dr \int_\R k(t)^2 dt + O\left(b_n\right) + O(|D_n| b_n^5).
\end{align*}
If in addition $b_n^5 \Dn \rightarrow 0$ then 
\begin{align*}
 \sqrt{b_n} \Dn \left( \mathrm{ISE}_n(I) - \E\left( \mathrm{ISE}_n(I) \right) \right) \convl N(0,\tau^2)
 \end{align*}
where $\tau^2= \displaystyle 8 \rho^4 \int_I \left(  \frac{g_0(r)}{\sigma_d r^{d-1}} \right)^2  dr \int_{\R} ( k\ast k)^2(s) ds $ and $\ast$ denotes the convolution product.
\end{proposition}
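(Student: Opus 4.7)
The plan is to split $\Jn(r) := \wrho^{\,2}\gn(r)$ into a centered random part and a deterministic bias, writing $\Jn(r)-\rho^2 g_0(r) = V_n(r)+B_n(r)$ with $V_n(r):=\Jn(r)-\E\Jn(r)$ and $B_n(r):=\E\Jn(r)-\rho^2 g_0(r)$, so that $\E[\mathrm{ISE}_n(I)] = \int_I \mathrm{Var}(\Jn(r))\,dr + \int_I B_n(r)^2\,dr$. Campbell's formula applied to the second factorial moment of the DPP and the change of variables $u=y-x$ yield $\E\Jn(r) = \frac{\rho^2}{\sigma_d r^{d-1} b_n}\int_{\R^d} k((r-|u|)/b_n)g_0(|u|)\,du$; passing to polar coordinates, substituting $t=(r-s)/b_n$, and Taylor expanding $s\mapsto s^{d-1}g_0(s)$ around $s=r$ -- using the twice differentiability of $C$ (hence of $g_0=1-C^2/\rho^2$) and the symmetry of $k$ -- gives $B_n(r)=O(b_n^2)$, so $\int_I B_n(r)^2\,dr = O(b_n^4)$ contributes $O(b_n^5\Dn)$ to $b_n\Dn\E[\mathrm{ISE}_n(I)]$. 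For the variance term, I would express $\mathrm{Var}(\Jn(r))$ via the factorial cumulant measures $\gamma_{[2]},\gamma_{[3]},\gamma_{[4]}$; Brillinger mixing (Theorem~\ref{theoreme DPP brillinger}) makes the $\gamma_{[3]},\gamma_{[4]}$ contributions a factor $b_n$ smaller than the dominant diagonal pair term, which, after the same polar change of variables with $k^2$ replacing $k$, equals $\frac{2\rho^2 g_0(r)}{\sigma_d r^{d-1} b_n \Dn}\int k^2(t)\,dt\,(1+O(b_n))$. Integrating over $r\in I$ gives the asserted expansion of $b_n\Dn\E[\mathrm{ISE}_n(I)]$.

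For the central limit theorem, I would write
\begin{align*}
\mathrm{ISE}_n(I)-\E[\mathrm{ISE}_n(I)] = Q_n + L_n,\quad Q_n := \int_I (V_n(r)^2-\E V_n(r)^2)\,dr,\ L_n := 2\int_I V_n(r) B_n(r)\,dr,
\end{align*}
and treat the two pieces separately. Since $B_n=O(b_n^2)$ and the covariance $\mathrm{Cov}(V_n(r),V_n(s))$ is effectively supported on $|r-s|\lesssim b_n$ (the two kernels must overlap), one readily obtains $\mathrm{Var}(L_n)=O(b_n^4/\Dn)$, so that $\sqrt{b_n}\Dn L_n$ has variance $O(b_n^5\Dn)=o(1)$ under the hypothesis $b_n^5\Dn\to 0$ and is therefore negligible.

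The main obstacle is the CLT for the quadratic term $Q_n$ at the sharper scale $\sqrt{b_n}\Dn$. Expanding $V_n(r)^2$, the integrand of $Q_n$ admits a Hoeffding-type decomposition as a sum of DPP functionals $N_p(f_{D_n})$ for $p\in\{2,3,4\}$, each weighted by products of $k$-kernels. My plan is to adapt the cumulant-decay argument behind Proposition~\ref{TCL jolivet modifie}: Brillinger mixing (Theorem~\ref{theoreme DPP brillinger}) should control the reduced cumulants of all orders, and a careful bookkeeping of the bandwidth $b_n$ in these bounds should yield $\cum_k(\sqrt{b_n}\Dn Q_n)\to 0$ for every $k\geq 3$, while the variance converges to $\tau^2$. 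The delicate point is that the scaling $\sqrt{b_n}\Dn$ is sharper than the $\sqrt{\Dn}$ in Proposition~\ref{TCL jolivet modifie}, forcing a refined cumulant analysis beyond a direct application of that result. The limit variance is finally identified through the convolution identity $\int_\R k((r-a)/b_n)k((r-b)/b_n)\,dr = b_n(k\ast k)((a-b)/b_n)$: coupling the two $k$-factors across the $r$-integration and performing the same polar change of variables as in the expectation reduces the leading four-point contribution to $8\rho^4\int_I (g_0(r)/(\sigma_d r^{d-1}))^2\,dr\int_\R (k\ast k)^2(s)\,ds$, with the remaining terms absorbed into lower order by Brillinger mixing.
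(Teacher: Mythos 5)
Your route differs from the paper's: the paper does not redo the analysis of the integrated squared error at all, but verifies the hypotheses of the general results of \cite{heinrichstellaintegrated} (their Lemma 3.4 and Theorem 3.5), namely Brillinger mixing (Theorem~\ref{theoreme DPP brillinger}), locally uniform Lipschitz continuity of $g_0'$ (from the twice differentiability of $C$ and \eqref{DPPpcf}), and the bounds on the densities of the reduced factorial cumulant measures of orders $3$ and $4$ established in Lemma~\ref{lemma condition cumulant heinrich}. Your bias computation, the orders $O(b_n)$ and $O(\Dn b_n^5)$, and the negligibility of the cross term $L_n$ when $b_n^5\Dn\to 0$ are consistent with what that machinery delivers. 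But your proposal has a genuine gap at its central step: the CLT for the degenerate quadratic term $Q_n=\int_I\left(V_n(r)^2-\E V_n(r)^2\right)dr$ at the scale $\sqrt{b_n}\,\Dn$ is only announced (``careful bookkeeping \dots should yield''), not carried out, and it does not follow from Proposition~\ref{TCL jolivet modifie}. Indeed $Q_n$ is an $r$-integral of squares of centred second-order statistics with bandwidth-dependent kernels, so it is a functional of order up to four whose kernels are not dominated as in \eqref{majoration ft cumulant} by a fixed compactly supported $F$, and the normalisation $\sqrt{b_n}\,\Dn$ is strictly sharper than the $\sqrt{\Dn}$ of that proposition (and than the pointwise scale $\sqrt{b_n\Dn}$). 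Proving $\cum_k\left(\sqrt{b_n}\,\Dn\, Q_n\right)\to 0$ for all $k\geq 3$ requires a separate combinatorial cumulant analysis tracking how many factors of $b_n$ each indecomposable term gains from the kernel localisations and the $r$-integration; this is precisely the content of \cite[Theorem 3.5]{heinrichstellaintegrated}, which the paper invokes rather than reproves, and which your sketch leaves unproved.

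A second, smaller issue: you attribute the ``factor $b_n$ smaller'' size of the $\gamma_{[3]}$ and $\gamma_{[4]}$ contributions (in the variance expansion, and implicitly in the cumulant bounds) to Brillinger mixing alone. Finiteness of the total variation of the reduced cumulant measures does not give this gain, since these measures could a priori concentrate; the factor $b_n$ per localised kernel comes from the local boundedness of $c^{red}_{[3]}$ and the uniform integrability condition on $c^{red}_{[4]}$, i.e. exactly the conditions \eqref{hyp densite cumulant 3}--\eqref{hyp densite cumulant 4} of Lemma~\ref{lemma condition cumulant heinrich}, which for DPPs follow from the explicit formulas \eqref{expression densite cumulant 3}--\eqref{expression densite cumulant 4}. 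Your argument needs to state and use these density conditions (or the explicit DPP cumulant densities) at that point; Brillinger mixing by itself is not enough.
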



Proposition~\ref{proposition erreur quadratique moyenne stella} 
is an application to the DPP's case of the results given in~\cite{heinrichstellaintegrated}. 
In addition to the Brillinger mixing, ensured by Theorem~\ref{theoreme DPP brillinger}, 
 and the properties of the sequence $\lbrace D_n \rbrace_{n\in\N}$, the authors need two additional assumptions. Namely, these assumptions are the locally uniform Lipschitz continuity of the first derivative of $g_0$ and a second assumption related to the densities of the reduced factorial cumulant measures. By \eqref{DPPpcf} and since $C$ is twice differentiable on $\R^d\setminus \lbrace 0 \rbrace $, the first derivative of $g_0$ is uniformly Lipschitz continuous on every compact sets in $\R^+ \setminus \lbrace 0\rbrace$ so the first assumption holds. The second assumption is verified by Lemma~\ref{lemma condition cumulant heinrich} below.  Consequently, Proposition~\ref{proposition erreur quadratique moyenne stella} 
 is proved by~\cite[Lemma 3.4]{heinrichstellaintegrated}  and~\cite[Theorem 3.5]{heinrichstellaintegrated}.

\begin{lemma}\label{lemma condition cumulant heinrich}
  Let be an isotropic DPP with kernel $C$ verifying the condition $\K(\rho)$, whose  reduced factorial cumulant moment measures of order  $3$ and $4$  have densities $c^{red}_{[3]}$ and $c^{red}_{[4]}$, respectively. For all compact set $K\subset \R^d$ and $\epsilon>0$, we have
  \begin{align}\label{hyp densite cumulant 3}
   \sup_{ \stackrel{(u,v)\in \R^{2d}}{\left(|u|,|v|\right) \in ({K^{\oplus \epsilon}})^2}}  \left|c^{red}_{[3]} (u,v) \right| <+ \infty
   \end{align}
   and
   \begin{align}\label{hyp densite cumulant 4}
\sup_{ \stackrel{(u,v)\in \R^{2d}}{\left( |u|,|v|\right) \in ({K^{\oplus \epsilon}})^2}}  \int_{\R^d} \left|c^{red}_{[4]} (u,w,v+w) \right| dw <+ \infty,
\end{align}
where $K^{\oplus \epsilon}=K + B(0,\epsilon)$ and $B(0,\epsilon)$ is the Euclidean ball centred at $0$ with radius $\epsilon$.
\end{lemma}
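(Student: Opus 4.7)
The plan is first to obtain an explicit expression for the densities $c^{red}_{[3]}$ and $c^{red}_{[4]}$ as sums of cyclic products of the kernel $C$, and then to control these products using the two basic features of any kernel satisfying $\K(\rho)$: the pointwise bound $|C(x)|\leq C(0)=\rho$ (which follows from the fact that $\F(C)\geq 0$ makes $C$ a non-negative definite function in the Bochner sense) and the square integrability $C\in L^2(\R^d)$.

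The derivation of the density formula proceeds by rerunning the proof of Lemma~\ref{lemme egalite cumulant K} with the test function $f = \sum_{i=1}^k t_i \1_{A_i}$ in place of $s\1_{A}$: expanding the log-Laplace series of Proposition~\ref{power expansion functional} in the $t_i$'s and identifying the coefficient of $t_1\cdots t_k$ yields
\[c_{[k]}(x_1,\ldots,x_k) = (-1)^{k+1} \sum_{\sigma} \prod_{i=1}^k C(x_i-x_{\sigma(i)}),\]
where $\sigma$ runs over the cyclic permutations of $\{1,\ldots,k\}$, combinatorial identities analogous to~\eqref{simplification multinomiale 1}--\eqref{simplification multinomiale 2} cancelling the contributions from non-cyclic permutations. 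Stationarity then delivers the reduced density $c^{red}_{[k]}$ by setting $x_1=0$, so that it becomes a function of the $k-1$ increments $x_i-x_1$.

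Once this formula is available, the bounds are straightforward. For $k=3$, the two $3$-cycles give the same product by $C(-x)=C(x)$, hence
\[c^{red}_{[3]}(u,v) = 2\,C(u)\,C(u-v)\,C(v),\]
which is uniformly bounded on $\R^{2d}$ by $2\rho^3$; this establishes~\eqref{hyp densite cumulant 3} without even invoking the restriction $(|u|,|v|)\in(K^{\oplus\epsilon})^2$. For $k=4$, the six $4$-cycles collapse into three distinct cyclic products, each of the form $C(a_1)C(a_2)C(a_3(w))C(a_4(w))$ after the substitution $(x_2,x_3,x_4)=(u,w,v+w)$ with $x_1=0$. In two of these products $a_1$ and $a_2$ depend only on $u,v$ and are automatically bounded by $\rho$; in the third, all four factors involve $w$ and we bound two of them trivially by $\rho$. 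In each case the Cauchy--Schwarz inequality applied to the remaining pair of translates of $C$ in $w$ gives $\int_{\R^d}|C(a_3(w))C(a_4(w))|dw \leq \int_{\R^d} C(x)^2 dx$, so that
\[\int_{\R^d} \bigl|c^{red}_{[4]}(u,w,v+w)\bigr| dw \leq 6\rho^2 \int_{\R^d} C(x)^2 dx < +\infty\]
uniformly in $(u,v)\in\R^{2d}$, which yields~\eqref{hyp densite cumulant 4}. The one non-routine step is really the first: the cyclic-product formula for $c_{[k]}$ must be extracted carefully from the log-Laplace expansion, but this is essentially a reprise of the argument already developed for Lemma~\ref{lemme egalite cumulant K}. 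Isotropy of $C$ plays no role in the estimates and is retained only for consistency with the statistical framework of Section~\ref{pcf}.
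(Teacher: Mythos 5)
Your proof is correct, and at its core it is the same argument as the paper's: write $c^{red}_{[3]}$ and $c^{red}_{[4]}$ as (sums of) cyclic products of $C$, bound the factors not integrated over, and apply Cauchy--Schwarz in the remaining variable for the fourth-order bound; your explicit formulas agree with \eqref{expression densite cumulant 3}--\eqref{expression densite cumulant 4} and your constants $2\rho^3$ and $6\rho^2\int C^2$ are right. The only real divergence is how the density formulas are obtained: the paper deduces them in the appendix directly from the determinantal joint intensities (Definition~\ref{DPPdefinition intro}) combined with the combinatorial Definition~\ref{definition factorial cumulant measure}, whereas you re-run the log-Laplace expansion of Lemma~\ref{lemme egalite cumulant K} with a multilinear test function $\sum_i t_i\1_{A_i}$ and extract the coefficient of $t_1\cdots t_k$; both routes are legitimate and are left at a comparable level of sketchiness (the paper's appendix also states the formulas without detailed derivation). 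Two further minor differences, both in your favour: you replace the paper's continuity-plus-compactness argument for \eqref{hyp densite cumulant 3} by the global bound $|C(x)|\leq C(0)=\rho$, which does hold under $\K(\rho)$ (nonnegativity of $\F(C)$ together with continuity of $C$ gives Fourier inversion, hence the Bochner-type bound) and is in any case needed for the third term in the fourth-order estimate, where all four factors depend on $w$; and you correctly observe that isotropy is irrelevant to the bounds.
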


\begin{proof}
By \eqref{expression densite cumulant 3}-\eqref{expression densite cumulant 4} in Section~\ref{appendix brillinger}, we have for all $(u,v,w) \in \R^{3d}$,
\begin{align*}
 c^{red}_{[3]} (u,v) = 2 C(u)C(v)C(v-u)
\end{align*}
and
\begin{multline*}
   c^{red}_{[4]} (u,v,w) = -2 \left[  C(u)C(v)C(u-w)C(v-w) \right. \\ \left. + C(u) C(w) C(u-v) C(v-w) + C(v) C(w) C(u-v) C(u-w)   \right].
\end{multline*}
Notice that $K^{\oplus \epsilon}$ is compact and since $C$ verifies the condition $\K(\rho)$, it is continuous. Therefore,  by~\eqref{expression densite cumulant 3}, \eqref{hyp densite cumulant 3} holds immediately. Finally, \eqref{hyp densite cumulant 4} is verified by Cauchy-Schwarz inequality and \eqref{expression densite cumulant 4}. 
\end{proof}

\section{Proof of Proposition~\ref{TCL jolivet modifie}}\label{section proof TCL brillinger}

\subsection{Complement on the moments and cumulants of a point process}

We present here the necessary background to prove Proposition~\ref{TCL jolivet modifie}. 
Let $p$ and $k$ be two integers and $\X$ a point process that admits moments of any order. Consider, for $1\leq i \leq k$, the random variables
\begin{align}
N_p\left( \phi_i\right)= \sum_{(x_1,\ldots,x_p) \in \X^p} \phi_i(x_1,\ldots,x_p)
\end{align}
where for $i=1,\ldots,p$, $\phi_i$ is a function from $\R^{dp}$ to $\R$.

For $l,s \leq kp$ , denote $\mathcal{P}_l^{kp}$ (resp. $\mathcal{Q}_s^l$) the set of all partitions of $\lbrace 1,\ldots,kp \rbrace$ (resp. $\lbrace 1,\ldots,l\rbrace$) into $l$ (resp. $s$) non empty sets $p_1,\ldots,p_l$ (resp. $q_1,\ldots,q_s$).   For $r=1,\ldots,s$, denote $\beta_1,\ldots,\beta_{|q_r|}$ the elements of the set $q_r$ and  $|q|$ the cardinal of a given set $q$. 
Then, as proved by Jolivet in~\cite[p121-122]{jolivetTCL}, we have 
\begin{align}\label{moment Phi shorten form}
  E\left(N_p\left( \phi_1\right) \ldots  N_p\left( \phi_k\right) \right) = \sum_{l=1}^{kp} \sum_{\Pi_l  \in \mathcal{P}_l^{kp}} \sum_{s=1}^l \sum_{\chi_s^l\in \mathcal{Q}_s^l}  I_l( \Pi_l,\chi_s^l)
\end{align}
where for all $l,s\leq kp$,
\begin{multline}\label{moment Phi}
I_l( \Pi_l,\chi_s^l) = \int_{\R^{dl}}  \prod_{m=1}^{l}\prod_{j \in p_m}  \1_{\lbrace x_m =\theta_j \rbrace} \times \ldots \\ \times \prod_{i=1}^{k} \phi_i( \theta_{(i-1)p+1},\ldots,\theta_{ip}) \prod_{r=1}^s \gamma_{|q_r|} (dx_{\beta_1} \ldots dx_{\beta_{|q_r|}}).
\end{multline}
The introduction of the term $\theta $ is not easy to understand at first sight. For the sake of clarity, we give an example for $p=k=2$ and $\Pi_2:=\lbrace p_1,p_2 \rbrace$ a given partition of the set $\lbrace 1,2,3,4 \rbrace$ into 2 non empty sets, namely $p_1=\lbrace 1,4 \rbrace$ and $p_2=\lbrace 2,3 \rbrace$. In this case, we have
  \begin{align*}
   \prod_{i=1}^{k} \phi_i( \theta_{(i-1)p+1},\ldots,\theta_{ip}) = \phi_1( \theta_1, \theta _2) \phi_2( \theta_3,\theta_4).
  \end{align*}
Thus, by the last equation, we have
\begin{align*}
 \prod_{m=1}^{2}\prod_{j \in p_m}  \1_{\lbrace x_m =\theta_j \rbrace} \prod_{i=1}^{k} \phi_i( \theta_{(i-1)p+1},\ldots,\theta_{ip}) = \phi_1( x_1, x_2) \phi_2( x_2 ,x_1)
\end{align*}
and a similar calculus is done if, for $l=1,\ldots,4$, we choose another partition $\Pi_l$ of $\lbrace 1,2,3,4 \rbrace$. We can now describe completely  $\cum(N_p\left( \phi_1 \right) , \ldots,N_p\left( \phi_k\right)  )$.
\begin{theorem}[\cite{jolivetTCL}]\label{expression cumulant jolivet}
 The cumulant moment $\cum(N_p\left( \phi_1\right), \ldots, N_p\left( \phi_k\right))$ is equal to the sum of integrals $I_l( \Pi_l,\chi_s^l)$  in Formula~\eqref{moment Phi shorten form} that are indecomposable, i.e. that can not be decomposed as a product of at least two integrals. 
\end{theorem}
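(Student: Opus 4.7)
The plan is to derive the result from the Leonov--Shiryaev relation expressing any joint cumulant as an alternating sum of products of moments: for random variables $Y_1,\dots,Y_k$ with finite moments,
\begin{align*}
\cum(Y_1,\dots,Y_k) = \sum_{\pi\in \mathcal{P}(k)} (-1)^{|\pi|-1}(|\pi|-1)! \prod_{A\in\pi} \E\Bigl[\prod_{i\in A} Y_i\Bigr],
\end{align*}
where $\mathcal{P}(k)$ is the set of partitions of $\{1,\dots,k\}$. Applying this with $Y_i=N_p(\phi_i)$ and expanding each moment factor $\E[\prod_{i\in A}N_p(\phi_i)]$ via formula~\eqref{moment Phi shorten form} yields a sum of integrals of the form $I_l(\Pi_l,\chi_s^l)$. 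The key observation is that, for each fixed $\pi$, this expansion enumerates precisely those integrals in the full expansion~\eqref{moment Phi shorten form} of $\E[N_p(\phi_1)\cdots N_p(\phi_k)]$ that are \emph{$\pi$-compatible}, in the sense that neither $\Pi_l$ nor $\chi_s^l$ ties two different blocks of $\pi$ when $\{1,\dots,kp\}$ is regarded as the disjoint union of the $p$-element windows $\{(i-1)p+1,\dots,ip\}$ indexed by $i\in\{1,\dots,k\}$.

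Next, to each integral $I_l(\Pi_l,\chi_s^l)$ occurring in~\eqref{moment Phi shorten form}, associate its \emph{index partition} $\sigma(\Pi_l,\chi_s^l)\in\mathcal{P}(k)$: the finest partition of $\{1,\dots,k\}$ whose equivalence relation contains $i\sim j$ whenever $\phi_i$ and $\phi_j$ share a common variable $x_m$ via $\Pi_l$, or involve two variables $x_{m_1},x_{m_2}$ tied by a common cumulant factor $\gamma_{|q_r|}$ via $\chi_s^l$. By construction $I_l(\Pi_l,\chi_s^l)$ is indecomposable in the sense of the statement precisely when $\sigma(\Pi_l,\chi_s^l)=\widehat{1}:=\{\{1,\dots,k\}\}$; otherwise the factors $\phi_i$ and the independent cumulant-measure factors split along the blocks of $\sigma$ and the integral reduces to a genuine product of at least two integrals. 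Interchanging the order of summation in the Leonov--Shiryaev expansion then produces
\begin{align*}
\cum\bigl(N_p(\phi_1),\dots,N_p(\phi_k)\bigr) = \sum_{(\Pi_l,\chi_s^l)} \Biggl(\sum_{\pi\succeq \sigma(\Pi_l,\chi_s^l)} (-1)^{|\pi|-1}(|\pi|-1)!\Biggr) I_l(\Pi_l,\chi_s^l),
\end{align*}
where $\pi\succeq\sigma$ means that $\pi$ is coarser than $\sigma$.

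The proof is then closed by the classical Möbius inversion on the partition lattice, which asserts that for every $\sigma\in\mathcal{P}(k)$ one has $\sum_{\pi\succeq\sigma}(-1)^{|\pi|-1}(|\pi|-1)! = \mathbf{1}_{\{\sigma=\widehat{1}\}}$; the inner coefficient therefore equals $1$ on indecomposable integrals and $0$ otherwise, giving the stated formula. The main obstacle I foresee is the combinatorial bookkeeping at the second step: one must check that the expansion of $\prod_{A\in\pi}\E[\prod_{i\in A}N_p(\phi_i)]$ through~\eqref{moment Phi shorten form} really recovers, with multiplicity exactly one, every $\pi$-compatible integral of the full moment expansion, carefully accounting for the factorization across disjoint blocks $A$ of the products of cumulant measures $\gamma_{|q_r|}$. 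Once this identification is in place, the remainder is standard partition-lattice algebra.
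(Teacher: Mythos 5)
Your argument is correct, but note that the paper itself gives no proof of this theorem: it is quoted from Jolivet, and the surrounding text only imports the moment expansion \eqref{moment Phi shorten form}. What you have written is essentially the classical Leonov--Shiryaev--type argument that underlies Jolivet's result: start from the moments-to-cumulants inversion $\cum(Y_1,\dots,Y_k)=\sum_{\pi}(-1)^{|\pi|-1}(|\pi|-1)!\prod_{A\in\pi}\E\bigl[\prod_{i\in A}Y_i\bigr]$, expand each block moment by \eqref{moment Phi shorten form}, and resum. The three ingredients you need all check out. First, for fixed $\pi$ the expansion of $\prod_{A\in\pi}\E\bigl[\prod_{i\in A}N_p(\phi_i)\bigr]$ is in bijection with the $\pi$-compatible pairs $(\Pi_l,\chi_s^l)$ (restriction to the blocks of $\pi$ gives the bijection), and for such a pair the integrand, the coincidence indicators and the product of cumulant measures in $I_l(\Pi_l,\chi_s^l)$ all factor across the blocks, so Fubini identifies the product of block integrals with $I_l(\Pi_l,\chi_s^l)$; all integrals are finite because the process has moments of every order and the $\phi_i$ are bounded with compact support, and all sums are finite, so the interchange of summations is harmless. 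Second, $\pi$-compatibility is exactly $\sigma(\Pi_l,\chi_s^l)\preceq\pi$ with $\sigma$ your connectivity partition, and $\sigma=\widehat 1$ is precisely the (combinatorial) indecomposability in the statement --- one should read ``cannot be decomposed as a product'' structurally, i.e.\ no splitting of the windows into two groups sharing neither an $x_m$ nor a cumulant factor, not as an accidental numerical factorization; this is how Jolivet and the present paper use the term. Third, the lattice identity $\sum_{\pi\succeq\sigma}(-1)^{|\pi|-1}(|\pi|-1)!=\1_{\{\sigma=\widehat 1\}}$ is the standard M\"obius computation on the interval $[\sigma,\widehat 1]\cong\mathcal{P}(|\sigma|)$ (for $m=|\sigma|\geq 2$ the sum $\sum_j(-1)^{j-1}(j-1)!\,\Delta_{j,m}$ vanishes). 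So your proposal supplies a complete, self-contained proof of the cited theorem; what it buys over the paper's treatment is independence from the external reference, at the price of the combinatorial bookkeeping you correctly identify as the only delicate point.
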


\subsection{Proof of Proposition~\ref{TCL jolivet modifie}}\label{proof tcl jolivet}

Assuming \eqref{comportement asymptotic cumulant} and \eqref{condition limite variance tcl jolivet}, the proposition is proved by~\cite[Theorem 1]{jansonTCLcumulant}. Let us check  \eqref{comportement asymptotic cumulant}.
By \cite[Chapter II, Section 12, Equation (37)]{shiryaev1995probability}, if $X$ and $Y$ are two independent random variables $\cum_k(X+Y)=\cum_k(X) + \cum_k(Y)$ and  the cumulant of order $k$ of a constant is null for $k\geq 2$. Consequently, for $k\geq2$,
\begin{align*}
 \cum_k\left(\sqrt{\Dn} \left[ N_p\left(f_{D_n}\right) - \E\left(N_p \left(f_{D_n} \right) \right) \right] \right) &=  \cum_k\left(\sqrt{\Dn} N_p\left(f_{D_n}\right)\right)\\ &= \Dn ^{\frac{k}{2}}  \cum_k\left(N_p \left(f_{D_n}\right)\right).
\end{align*}

By Theorem~\ref{expression cumulant jolivet}, for every $k\in \N$, $\cum_k\left(N_p\left(f_{D_n}\right) \right)$ is a finite sum of indecomposable integrals $I_l( \Pi_l,\chi_s^l)$. 
Thus, it is sufficient to prove that for any $k\geq 2$, each integral $\left|I_l( \Pi_l,\chi_s^l)\right|=  O\left( |D_n|^{1-k}\right)$. By~\eqref{moment Phi} we have
\begin{multline}
I_l( \Pi_l,\chi_s^l)=\int_{\R^{dl}}  \prod_{m=1}^{l}\prod_{j \in p_m}  \1_{\lbrace x_m =\theta_j \rbrace} \prod_{i=1}^{k} f_{D_n}(\theta_{(i-1)p+1}, \theta_{(i-1)p+2}, \ldots, \theta_{ip} ) \\ \times \prod_{r=1}^s \gamma_{q_r} (dx_{\beta_1} \ldots dx_{\beta_{|q_r|}}).
\end{multline}
Then, by Definition~\ref{definition total variation}, we obtain from the last equation that
\begin{multline*}
|I_l( \Pi_l,\chi_s^l)|\leq \int_{\R^{dl}}  \prod_{m=1}^{l}\prod_{j \in p_m}  \1_{\lbrace x_m =\theta_j \rbrace}\prod_{i=1}^{k}\left|f_{D_n}(\theta_{(i-1)p+1}, \theta_{(i-1)p+2}, \ldots, \theta_{ip} )\right| \\ \times \prod_{r=1}^s \left|\gamma_{q_r}\right| (dx_{\beta_1} \ldots dx_{\beta_{|q_r|}}).
\end{multline*}
Using~\eqref{majoration ft cumulant}, we get
\begin{multline}\label{majoration I 1 cumulant}
|I_l( \Pi_l,\chi_s^l)|\leq \frac{1}{|\widetilde{D}_n|^k} \int_{\R^{dl}}  \prod_{m=1}^{l}\prod_{j \in p_m}  \1_{\lbrace x_m =\theta_j \rbrace} \prod_{i=1}^{k} \1_{\lbrace \theta_{(i-1)p+1}\in D_n \rbrace}   \\ \times  F \left( \theta_{(i-1)p+2}-\theta_{(i-1)p+1},\ldots,\theta_{ip}-\theta_{(i-1)p+1}\right) \prod_{r=1}^s \left|\gamma_{q_r}\right| (dx_{\beta_1} \ldots dx_{\beta_{|q_r|}}).
\end{multline}

Let $||F||_{\infty}$ denotes the supremum of $F$ on $\R^{d(p-1)}$.
Since the function $F$ is bounded and compactly supported, there exist compacts $K_1,\ldots,K_{p-1}$  such that
\begin{align*}
\forall (x_1,\ldots,x_{p-1}) \in (\R^d)^{p-1}, \quad F(x_1,\ldots,x_{p-1}) \leq ||F||_{\infty} \1_{\lbrace x_1 \in K_1 \rbrace}\ldots \1_{\lbrace x_{p-1} \in K_{p-1} \rbrace}.
\end{align*}
Then, we deduce from~\eqref{majoration I 1 cumulant} that
\begin{multline}\label{majoration pour faire comme jolivet cumulant}
|I_l( \Pi_l,\chi_s^l)| \leq \left( \frac{ ||F||_\infty}{|\widetilde{D}_n|}\right)^k \int_{\R^{dl}}   \prod_{m=1}^l \prod_{j \in p_m}  \1_{\lbrace x_m =\theta_j \rbrace} \prod_{i=1}^k  \1_{\lbrace \theta_{(i-1)p+1}\in D_n \rbrace} \\ \prod_{\eta=1}^{p-1}  \1_{\lbrace (\theta_{(i-1)p+\eta+1} - \theta_{(i-1)p+1}) \in K_\eta \rbrace}  \prod_{r=1}^s |\gamma_{q_r}| (dx_{\beta_1} \ldots dx_{\beta_{|q_r|}}).
\end{multline}
Moreover, as already proved in~\cite[Section 4, Theorem 3]{jolivetTCL}, we have as $n$ tends to infinity,
\begin{multline}\label{result jolivet asymptotic cumulant}
 \int_{\R^{dl}}   \prod_{m=1}^l \prod_{j \in p_m}  \1_{\lbrace x_m =\theta_j \rbrace} \prod_{i=1}^k \1_{\lbrace \theta_{(i-1)p+1}\in D_n \rbrace}  \ldots \\ \ldots \prod_{\eta=1}^{p-1}  \1_{\lbrace (\theta_{(i-1)p+\eta+1} - \theta_{(i-1)p+1}) \in K_\eta \rbrace}  \prod_{r=1}^s |\gamma_{q_r}| (dx_{\beta_1} \ldots dx_{\beta_{|q_r|}}) = O \left( |D_n|\right).
\end{multline}
Since $\frac{|\widetilde{D}_n|}{|D_n|}\xrightarrow[n\rightarrow +\infty]{} \kappa$, the right hand term of~\eqref{majoration pour faire comme jolivet cumulant} is,  by~\eqref{result jolivet asymptotic cumulant}, asymptotically of order $\Dn^{1-k}$, which ends the proof.

\section{Proof of Proposition~\ref{proposition convergence moment 1 et clt gn}}\label{section preuve clt gn}

The proof is based on the following lemmas.
\begin{lemma}\label{lemma clt sur gn}
 Let $\lbrace D_n \rbrace_{n\in\N}$ be a regular sequence of subsets of $\, \R^d$.     Assume that the sequence $\lbrace b_n\rbrace_{n\in\N}$ is such that  $b_n\rightarrow 0$ and  $b_n^3 \Dn \rightarrow +\infty $. Let $k$ be a  symmetric and bounded function with compact support included in $[-T,T]$, for a given $T>0$, and $\int_\R k(t) dt =  1$.
 Let $C$ be an isotropic twice differentiable kernel on  $ \R^d\setminus \lbrace 0 \rbrace$ verifying  $\K(\rho)$ for a given $\rho>0$. 
 Then, for all $r>0$,  we have the convergence
 \begin{align*}
 \sqrt{b_n \Dn} \left( \wrho^{\, 2}\gn(r) - \E( \wrho^{\, 2} \gn(r) ) \right) \convl N(0, \kappa^2) 
 \end{align*}
 where $\kappa^2 = 2 \rho^2 \frac{g_0(r)}{\sigma_d r^{d-1}} \sqrt{\int_{\R} k^2(t) dt} $.
\end{lemma}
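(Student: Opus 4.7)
The plan is to identify $T_n(r):=\wrho^{\,2}\gn(r)$ as a functional of order $p=2$: with
\[
f_{D_n}(x,y)=\frac{\1_{\{x,y\in D_n,\,x\ne y\}}}{\sigma_d\, r^{d-1}\,b_n\,|D_n\cap D_n^{x-y}|}\,k\!\left(\frac{r-|x-y|}{b_n}\right),
\]
the $\wrho^{\,2}$ factor cancels in \eqref{formula estimator pcf partie brillinger}, so $T_n(r)=N_2(f_{D_n})$ carries no $\wrho$ dependence. Setting $h_{D_n}:=\sqrt{b_n}\,f_{D_n}$, one has $\sqrt{b_n|D_n|}\,(T_n(r)-\E T_n(r))=\sqrt{|D_n|}\,(N_2(h_{D_n})-\E N_2(h_{D_n}))$, so I aim to derive a CLT for $\sqrt{|D_n|}\,N_2(h_{D_n})$ in the spirit of Proposition~\ref{TCL jolivet modifie}. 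Because $k$ is supported in $[-T,T]$ and $b_n\to 0$, the summand vanishes outside $|y-x|\in[r-b_nT,r+b_nT]\subset B(0,2r)$ for large $n$; the regularity of $\{D_n\}$ (Definition~\ref{definition regular set}) furthermore gives $|D_n\cap D_n^{x-y}|\ge c_0|D_n|$ uniformly on this set. These two facts yield the pointwise majoration
\[
|h_{D_n}(x,y)|\le \frac{1}{|D_n|}\,\1_{\{x\in D_n\}}\,F_n(y-x),\quad F_n(z)=\frac{\|k\|_\infty}{c_0\,\sigma_d\, r^{d-1}\sqrt{b_n}}\,\1_{\{|z|\in[r-b_nT,\,r+b_nT]\}}.
\]

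The majorant $F_n$ depends on $n$ with $\|F_n\|_\infty=O(b_n^{-1/2})$ and $|\mathrm{supp}\,F_n|=O(b_n)$, so Proposition~\ref{TCL jolivet modifie} does not apply verbatim. I would revisit its proof and substitute this scaling into the chain \eqref{majoration pour faire comme jolivet cumulant}--\eqref{result jolivet asymptotic cumulant}. Brillinger mixing of the DPP (Theorem~\ref{theoreme DPP brillinger}) still bounds the total variation of every reduced cumulant measure that appears in the indecomposable partitions of \eqref{moment Phi shorten form}; tracking the remaining compact constraints contributes a factor $\|F_n\|_\infty^{k}=O(b_n^{-k/2})$ together with a volume factor of order $b_n$ per free shift variable. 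A careful degrees-of-freedom count then yields
\[
\cum_k(T_n(r))=O\bigl((b_n|D_n|)^{-(k-1)}\bigr),\qquad k\ge 2,
\]
so that $\cum_k\bigl(\sqrt{b_n|D_n|}\,T_n(r)\bigr)=O\bigl((b_n|D_n|)^{1-k/2}\bigr)\to 0$ for every $k\ge 3$, since the hypothesis $b_n^3|D_n|\to\infty$ (together with $b_n\to 0$) implies $b_n|D_n|\to\infty$.

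I would then compute the limit of $\mathrm{Var}\bigl(\sqrt{b_n|D_n|}\,T_n(r)\bigr)$ via the variance formula stated in the appendix for functionals of order $p=2$, which writes the variance as a linear combination of integrals of $f_{D_n}\otimes f_{D_n}$ against the factorial cumulant moment measures of orders $2$, $3$ and $4$. Substituting the explicit form of these measures for the DPP in terms of $C$, and of $g_0$ from \eqref{DPPpcf}, and changing variables by $u=(r-|y-x|)/b_n$, dominated convergence identifies $\kappa^2$ as the limit: the leading contribution is the one weighted by $\rho^{(2)}$ and concentrates on the sphere of radius $r$, while contributions from cumulants of order $\ge 3$ are $o(1)$ because they carry at least one extra factor $b_n$. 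Combined with the cumulant bound above, Janson's theorem (as invoked in the proof of Proposition~\ref{TCL jolivet modifie}) delivers the stated CLT.

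The main obstacle is the cumulant estimate: because the natural majorant $F_n$ sits precisely at the critical scaling $\|F_n\|_\infty\,|\mathrm{supp}\,F_n|^{1/2}=O(1)$, Proposition~\ref{TCL jolivet modifie} is not directly applicable and the bandwidth dependence must be tracked throughout Jolivet's partition-based expansion. The variance computation is conceptually routine but requires some bookkeeping to verify that every factorial-cumulant contribution of order $\ge 3$ is indeed $o\bigl((b_n|D_n|)^{-1}\bigr)$.
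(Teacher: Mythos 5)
Your overall skeleton (order-$2$ functional, Brillinger mixing, cumulant bounds plus Janson's theorem) matches the paper, but the pivotal step --- the cumulant estimate --- is asserted rather than proved, and this is a genuine gap. You correctly observe that with the bandwidth factor $1/b_n$ absorbed into $f_{D_n}$ the majorant $F_n$ becomes $n$-dependent, so Proposition~\ref{TCL jolivet modifie} does not apply verbatim; but your fix, ``revisit the proof and a careful degrees-of-freedom count yields $\cum_k(T_n(r))=O((b_n\Dn)^{-(k-1)})$'', is exactly the hard part and is left unproven. Moreover, gaining ``a factor $b_n$ per free shift variable'' does not follow from Brillinger mixing alone: finiteness of the total variation of $\gamma^{red}_{[k]}$ only gives an $O(1)$ contribution when the cumulant measure is integrated over the annulus $\{\,|z|\in[r-Tb_n,r+Tb_n]\,\}$; to extract the extra $b_n$ you need the reduced cumulant measures to have (locally) bounded densities near that annulus, an ingredient you never invoke (it does hold for DPPs, by the explicit product-of-$C$ formulas, but it must be stated and used). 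The paper avoids all of this with a simpler device that your route misses: it keeps $b_n$ inside the statistic, i.e.\ works with $b_n\sigma_d r^{d-1}\wrho^{\,2}\gn(r)=N_2(f_{D_n})$, bounds $|k((r-|x_1-x_2|)/b_n)|\le \|k\|_\infty \1_{\{|x_1-x_2|\le r+T\}}$ (an $n$-independent compact constraint once $b_n<1$), and uses Lemma~\ref{lemma fenetre minore} to take $\widetilde D_n=D_n^{\circleddash(r+T)}$ in \eqref{majoration ft cumulant}; Proposition~\ref{TCL jolivet modifie} then applies verbatim and gives $\cum_k\bigl(\sqrt{b_n\Dn}\,\wrho^{\,2}\gn(r)\bigr)=O\bigl(b_n^{-k/2}\Dn^{1-k/2}\bigr)$, which tends to $0$ for $k\ge 3$ precisely because of the hypothesis $b_n^3\Dn\to\infty$. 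In other words, under the stated bandwidth assumption the crude bound suffices, and the sharper $O((b_n\Dn)^{1-k})$ bound you aim for is unnecessary (it would only matter if one wanted to weaken $b_n^3\Dn\to\infty$ to $b_n\Dn\to\infty$).

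A second, smaller deviation: for the first and second cumulants you propose to compute the variance limit from the appendix formula (Lemma~\ref{calcul variance general}) with the explicit DPP cumulant densities; the paper instead cites the results of \cite{StellaKleinempirical2011}, whose hypotheses were verified via \eqref{DPPpcf} and Lemma~\ref{lemma condition cumulant heinrich}. Your route is viable in principle, but as written it is again only a sketch (``conceptually routine but requires some bookkeeping''), so the identification of $\kappa^2$ is not actually established in your argument either.
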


 \begin{lemma}\label{lemma convergence moment 1 pcf chapter brillinger}
 Under the same assumptions as in Proposition~\ref{proposition erreur quadratique moyenne stella}, for all segment $I\subset \R^+ \setminus \{0 \}$,  there exists a constant $M\geq 0$ such that 
 \begin{align*}
 \sup_{r \in I} \left| \E\left( \wrho^{\, 2} \gn(r) - \rho^2 g_0(r) \right) \right|  &\leq  b_n^2  M\rho^2  \int_\R t^2 |k(t)| dt.
 \end{align*}
 \end{lemma}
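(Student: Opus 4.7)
\medskip

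\noindent\textbf{Proof proposal.} The plan is to compute the expectation $\E(\wrho^{\,2}\gn(r))$ exactly by the Campbell formula, reduce it to a one-dimensional convolution in the radial variable, and then expose the bias via a Taylor expansion that exploits the symmetry of the kernel $k$.

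First, observe that the factor $\wrho^{\,2}$ cancels the $\wrho^{\,2}$ appearing in the denominator of~\eqref{formula estimator pcf partie brillinger}, so that $\wrho^{\,2}\gn(r)$ is a sum over distinct pairs $(x,y)\in\X^2$. Applying the Campbell formula with the second-order factorial moment measure, using $\rho^{(2)}(x,y)=\rho^2 g_0(|x-y|)$ and the change of variables $u=x-y$, the inner integral over $y$ with both $y\in D_n$ and $y+u\in D_n$ produces exactly $|D_n\cap D_n^{u}|$, which cancels the corresponding factor in the denominator. Hence
\begin{align*}
\E\bigl(\wrho^{\,2}\gn(r)\bigr)
 = \frac{\rho^2}{\sigma_d r^{d-1}} \int_{\R^d} \frac{1}{b_n}\, k\!\left(\frac{r-|u|}{b_n}\right) g_0(|u|)\, du.
\end{align*}
Passing to polar coordinates and then substituting $t=(r-s)/b_n$, and writing $h(s):=g_0(s)\,s^{d-1}$, this reduces to the clean one-dimensional expression
\begin{align*}
\E\bigl(\wrho^{\,2}\gn(r)\bigr) = \frac{\rho^2}{r^{d-1}} \int_\R k(t)\, h(r-b_n t)\, dt,
\end{align*}
valid for all $n$ large enough that the support condition $r-b_n t>0$ holds uniformly for $r\in I$ and $t\in[-T,T]$.

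Since $\int k=1$ gives $\rho^2 g_0(r) = \frac{\rho^2}{r^{d-1}}\int k(t)\, h(r)\, dt$, subtracting yields
\begin{align*}
\E\bigl(\wrho^{\,2}\gn(r)\bigr) - \rho^2 g_0(r)
 = \frac{\rho^2}{r^{d-1}} \int_\R k(t)\bigl[h(r-b_n t)-h(r)\bigr]\, dt.
\end{align*}
A second-order Taylor expansion of $h$ about $r$ gives $h(r-b_nt)-h(r)=-b_n t\,h'(r)+\tfrac{1}{2}b_n^2 t^2 h''(\xi_{r,t})$ for some $\xi_{r,t}$ between $r$ and $r-b_nt$. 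The linear term integrates to zero because $k$ is symmetric, so only the quadratic remainder survives. Taking absolute values then produces the claimed $b_n^2\int t^2|k(t)|\,dt$ structure.

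It remains to absorb the residual factors into a single constant $M$ independent of $r\in I$ and $n$. The factor $r^{-(d-1)}$ is bounded on the compact segment $I\subset\R^+\setminus\{0\}$. For the second derivative, formula~\eqref{DPPpcf} gives $g_0(s)=1-C(s\mathbf{e})^2/\rho^2$ (using isotropy), which is $C^2$ on $\R^d\setminus\{0\}$ by the hypothesis on $C$; hence $h''$ is continuous, and therefore uniformly bounded, on the thickened compact $I^{\oplus \epsilon}$ for any small $\epsilon>0$. Choosing $\epsilon$ larger than $b_n T$ for $n$ sufficiently large ensures $\xi_{r,t}\in I^{\oplus \epsilon}$ for all $r\in I$ and $t\in[-T,T]$, and yields the uniform bound with $M:=\tfrac{1}{2}\bigl(\inf_{r\in I} r^{d-1}\bigr)^{-1}\sup_{\xi\in I^{\oplus\epsilon}}|h''(\xi)|$. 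The only mildly delicate point is verifying that the $C^2$ regularity of $C$ on $\R^d\setminus\{0\}$ transfers cleanly to $h$ via the isotropy and~\eqref{DPPpcf}; everything else is routine Taylor analysis with the symmetry of $k$ doing the essential work of eliminating the $O(b_n)$ term.
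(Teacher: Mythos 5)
Your proposal is correct and follows essentially the same route as the paper: Campbell's formula with $\rho^{(2)}=\rho^2 g_0$, cancellation of $\wrho^{\,2}$ and of $|D_n\cap D_n^{x-y}|$, reduction to a radial convolution, a second-order Taylor expansion in which the linear term vanishes by the symmetry of $k$, and a uniform bound on the second derivative of $s\mapsto s^{d-1}g_0(s)$ over a thickened compact neighbourhood of $I$ (the paper works with $f(s)=(s/r)^{d-1}g_0(s)$ and then extracts the factor $r_{\min}^{-(d-1)}$, which is exactly your constant $M$).
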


 The proofs of Lemmas~\ref{lemma clt sur gn}-\ref{lemma convergence moment 1 pcf chapter brillinger} are postponed to the end of this section.  Let us now prove Proposition~\ref{proposition convergence moment 1 et clt gn}. For all $n\in \N$ and $r>0$, we have 
 \begin{align}\label{decomposition pour clt g}
  \wrho^{\,2} \sqrt{b_n \Dn} ( \gn(r) - g_0(r) ) = A_n + B_n + C_n 
 \end{align}
where
\begin{align*}
 A_n &= \sqrt{b_n \Dn} \left[ \wrho^{\,2} \gn(r) - \E(\wrho^{\, 2} \gn(r) ) \right] \\
 B_n &= \sqrt{b_n \Dn} \left[  \E(\wrho^{\,2} \gn(r)) - \rho^2 g_0(r) \right]\\
 C_n &= \sqrt{b_n \Dn} g_0(r) \left[\rho^2 - \wrho^{\, 2}\right].
\end{align*}
By Lemma~\ref{lemma clt sur gn}  we have the convergence
\begin{align}\label{conv 1 pour clt g}
 A_n \convl N(0,\kappa^2)
\end{align}
and since $b_n^5 \Dn$ tends to $0$ as $n$ tends to infinity, we have by Lemma~\ref{lemma convergence moment 1 pcf chapter brillinger},
\begin{align}\label{conv 2 pour clt g}
 B_n \convP 0.
\end{align}
By Corollary~\ref{corollary convergence normal intensite DPP} and the delta method, we know that $\sqrt{|D_n|} (\hat \rho_n^2 - \rho^2)$ converges in distribution. Since $b_n\to 0$, we deduce that 
\begin{align}\label{conv 3 pour clt g}
 C_n \convP 0. 
\end{align}
Finally, by inserting \eqref{conv 1 pour clt g}-\eqref{conv 3 pour clt g} in~\eqref{decomposition pour clt g}, the proposition is proved by Slutsky's theorem and  the almost sure convergence of $\wrho^{\, 2}$ to $\rho^2$.

 \subsection{Proof of Lemma~\ref{lemma clt sur gn}}
 
 We need the following result.
\begin{lemma}\label{lemma fenetre minore}
 Let $r>0$ and $D$ a subset of $\, \R^d$ such that the Euclidean ball $B(0,r)$ is included in $D$. Then, for all $x \in B(0,r)$, we have $D^{\circleddash r} \subset  D \cap D^x $. 
\end{lemma}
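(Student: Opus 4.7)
The plan is to unpack the standard morphological meaning of $D^{\circleddash r}$, namely the erosion
\[
D^{\circleddash r} \;=\; \{\,y\in\R^d : y + B(0,r) \subset D\,\},
\]
and verify the two inclusions $D^{\circleddash r}\subset D$ and $D^{\circleddash r}\subset D^x$ separately. With the paper's convention $D^x = D-x$, the inclusion $y\in D^x$ is equivalent to $y+x\in D$, so both inclusions reduce to statements about membership of translates of $y$ in $D$.

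First I would fix an arbitrary $y\in D^{\circleddash r}$ and an arbitrary $x\in B(0,r)$. By definition of the erosion, every point $y+z$ with $z\in B(0,r)$ lies in $D$. Applying this with $z=0$ gives $y\in D$, while applying it with $z=x$ (which is allowed since $x\in B(0,r)$) gives $y+x\in D$, i.e.\ $y\in D-x = D^x$. Combining the two yields $y\in D\cap D^x$, which is the desired inclusion. The role of the hypothesis $B(0,r)\subset D$ is not used in the argument itself, but it guarantees that $D^{\circleddash r}$ contains $0$, so the inclusion is nonvacuous and useful for its later application in controlling the window overlap $|D_n\cap D_n^{x-y}|$ appearing in the kernel estimator $\widehat g_n$.

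There is no real obstacle; the proof is essentially a one-line unwinding of the definition of erosion. The only thing to be careful about is the translation convention ($D^x = D-x$, not $D+x$), which determines the sign with which $x$ is added inside the ball.
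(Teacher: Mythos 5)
Your proof is correct and is essentially the same as the paper's: both unwind the definition of the erosion $D^{\circleddash r}$ and of $D^x = D - x$, and observe that $x \in B(0,r)$ (together with $0\in B(0,r)$, or the paper's convention that erosion points already lie in $D$) immediately gives $y \in D$ and $y + x \in D$, hence $y \in D \cap D^x$. Your remark about the hypothesis $B(0,r)\subset D$ being used only to make the statement nonvacuous is a fair observation, not a deviation.
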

\begin{proof}[Proof of Lemma~\ref{lemma fenetre minore}]
 By definition, $ D \cap D^x = \left\lbrace u \in D, \ u + x \in D    \right\rbrace$ and 
 \begin{align*}
   D^{\circleddash r} = \left\lbrace u \in D, \ \forall v \in B(0,r), \ u+v \in D   \right\rbrace.
 \end{align*}
Since $x\in B(0,r)$, we have the inclusion $D^{\circleddash r} \subset  D \cap D^x $.
\end{proof}

Define for all $n\in \N$ and $(x_1,x_2)\in \R^{2d}$, 
 \begin{align*}
  f_{D_n}(x_1, x_2) =  \1_{\left\lbrace x_1\in D_n,\ x_2 \in D_n\right\rbrace}   \ \frac{1}{|D_n\cap D_n^{ x_1-x_2}|} k\left(\frac{r-|x_1-x_2|}{b_n}\right).
 \end{align*}
 Notice by \eqref{formula estimator pcf partie brillinger} that
\begin{align}\label{relation g et f pour lemme clt sur gn}
b_n \sigma_d r^{d-1} \wrho^{\,2}  \gn(r) =  \sum_{\substack{(x_1,x_2) \in \X^2 \\ x_1\neq x_2}}   f_{D_n}(x_1, x_2).
\end{align}
The support of $k$ is  included in $[-T,T]$ so for any  $(x_1,x_2) \in \R^{2d}$,
\begin{align*}
 \left|k\left(\frac{r-|x_1-x_2|}{b_n}\right) \right| \1_{\left\lbrace   x_2 \in D_n\right\rbrace} &\leq \left|k\left(\frac{r-|x_1-x_2|}{b_n}\right) \right| \1_{\left\lbrace |x_1-x_2|<r + T b_n\right\rbrace} \\
                                                                                     &\leq \left| k\left(\frac{r-|x_1-x_2|}{b_n}\right)\right| \1_{\left\lbrace |x_1-x_2|<r+T\right\rbrace} 
\end{align*}
as soon as $b_n<1$ which we assume without loss of generality since $\lbrace b_n \rbrace_{n\in\N}$ tends to $0$. Then, by Lemma~\ref{lemma fenetre minore} and since $k$ is bounded, there exists $M>0$ such that for all $n\in \N$,
 \begin{align*}
 \left| f_{D_n}(x_1, x_2) \right| \leq \frac{ M \1_{\lbrace x_1\in D_n \rbrace} }{| D_n^{\circleddash(r+T)}|} \1_{ \lbrace |x_1-x_2| \leq r+T \rbrace } .
 \end{align*}  
Therefore, by \eqref{comportement asymptotic cumulant} in Proposition~\ref{TCL jolivet modifie} and~\eqref{relation g et f pour lemme clt sur gn}, we have for all $k\geq 3$,
 \begin{align*}
  \cum_k \left( \sqrt{\Dn} b_n \sigma_d r^{d-1} \wrho^{\,2}  \gn(r) \right) = O( |D_n|^{1-\frac{k}{2}} )
 \end{align*}
whereby  for all $k\geq3$,
\begin{align*}
\cum_k \left(  \sqrt{b_n \Dn}  \wrho^{\, 2}\gn(r) \right) = O\left( b_n^{-k/2} \Dn^{1-\frac{k}{2}}  \right) 
\end{align*}
which tends to $0$ when $n$ goes to infinity since $b_n^3 \Dn \rightarrow \infty$.
Further, the convergences of $\cum_k \big(  \sqrt{b_n \Dn}  \wrho^{\, 2}\gn(r) \big)$ for $k=1,2$ are proved in~\cite{StellaKleinempirical2011} under conditions that we have already verified after Proposition~\ref{proposition erreur quadratique moyenne stella}. Finally, Lemma~\ref{lemma clt sur gn} is proved by the cumulant method, see~\cite[Theorem 1]{jansonTCLcumulant}.

\subsection{Proof of Lemma~\ref{lemma convergence moment 1 pcf chapter brillinger}}
 By~\eqref{formula estimator pcf partie brillinger} and Defintion~\ref{definition factorial moment intro}, we have for all $n\in \N$ and $r\in I$,
\begin{align*}
 \E\left( \wrho^{\, 2} \gn(r) \right) &= \frac{\rho^2}{ \sigma_d r^{d-1} b_n} \int_{\R^d}\int_{\R^d} \frac{ \1_{\lbrace x\in D_n, y \in D_n \rbrace} }{\left| D_n \cap D_n^{x-y}\right|} k\left( \frac{r-|x-y|}{b_n}\right) \frac{\rho^{(2)}(x,y)}{\rho^2}dxdy\\
    &= \frac{\rho^2}{\sigma_d r^{d-1} b_n} \int_{\R^d}\int_{\R^d} \frac{ \1_{\lbrace x\in D_n, y \in D_n \rbrace}}{\left| D_n \cap D_n^{x-y}\right|} k\left( \frac{r-|x-y|}{b_n}\right) g_0(|x-y|)dxdy.
\end{align*}
To shorten, denote $k_{b_n}(.) = \frac{1}{b_n} k\big( \frac{.}{b_n} \big)$. By the substitution $z=x-y$ and since $y\in D_n^z$ if and only if $z \in D_n^y$, we obtain from the last equation that
\begin{align*}
    \E\left( \wrho^{\, 2} \gn(r) \right)  &=   \frac{\rho^2}{\sigma_d r^{d-1}}  \int_{\R^d}\int_{\R^d}  \frac{\1_{\lbrace y\in D_n^z \cap  D_n \rbrace}}{\left| D_n \cap D_n^{z}\right|} k_{b_n}( r-|z|) g_0( |z|)dzdy \\
		    &=  \frac{\rho^2}{\sigma_d r^{d-1}}   \int_{\R^d} k_{b_n}( r-|z|) g_0( |z|)dz.
\end{align*}
Converting this integral into polar coordinates and by symmetry of  $k$, we get 
\begin{align*}
    \E\left( \wrho^{\, 2} \gn(r) \right)  &=  \rho^2 \int_0^{+\infty} \left(\frac{t}{r}\right)^{d-1}k_{b_n}( r-t)g_0(t)dt\\
                                    &= \rho^2 \int_{-\frac{r}{b_n}}^{+\infty}  k\left( u \right) \left(\frac{r+u b_n}{r}\right)^{d-1} g_0(r+u b_n)du.
\end{align*}
For all $n$ large enough, we have for all $r\in I$ that $\frac{t}{b_n} \geq T$, hence 
\begin{align*}
 \E\left( \wrho^{\, 2} \gn(r) \right) &= \rho^2 \int_{-T}^{T}  k\left( u \right) \left(\frac{r+u b_n}{r}\right)^{d-1} g_0(r+u b_n)du.
\end{align*}
Assume that $I$ writes $[r_{min},r_{max}]$ for $r_{max}>r_{min}>0$ and define for $s\in \R^+$, $f(s) := \left(\frac{s}{r}\right)^{d-1}g_0(s)$.
Notice that $I^{\oplus Tb_n}:=[r_{min}-T b_n , r_{max}+ T b_n] \subset \R^+ \setminus \{0\}$ as soon as $n$ is large enough which we assume without loss of generality.  Since $g_0(.)$ is of class $\mathcal{C}^2$ on $I^{\oplus Tb_n}$, so is $f(.)$. Thus by Taylor-Lagrange expansion, we have 
 \begin{align}\label{egalite pcf taylor expansion parti brillinger}
  \E\left( \wrho^{\, 2} \gn(r) \right) = \rho^2 \int_{-T}^{T}  k\left( u \right) \left( f(r)+ f'(r) u b_n + \int_r^{r+u b_n} f''(s) (u b_n + r-s) ds  \right)du.
\end{align}
Since $k$ is symmetric, we have $\int_{-T}^{T} uk(u)du =0$. Moreover, 
\begin{align*}
 \sup_{s\in I^{\oplus Tb_n}} \left|f''(s)\right| \leq \frac{1}{r^{d-1}_{min}} \sup_{s\in I^{\oplus Tb_n}} \left| \left(s^{d-1}g_0(s) \right)'' \right|,
\end{align*}
showing that $f''(.)$ is uniformly bounded on $I^{\oplus Tb_n}$ by a constant $M$. 
Further, for all $n\in \N$ and $s\in [r,r+u b_n]$, $|u b_n +r-s| \leq |ub_n|$. Finally, since $\int_{-T}^T k(u)du=1$, by~\eqref{egalite pcf taylor expansion parti brillinger}, we obtain for $n$ large enough,
\begin{align*}
 \left| \E\left( \wrho^{\, 2} \gn(r) - \rho^2 g_0(r) \right) \right|
 &\leq  b_n^2  M\rho^2  \int_\R u^2 |k(u)| du, \quad \forall r\in I. 
\end{align*}

\section{Appendix}\label{appendix brillinger}

We gather here some results useful to compute the asymptotic variance in Proposition~\ref{TCL jolivet modifie} and Corollary~\ref{corollary convergence normal intensite DPP}.\medskip

Let $\X$ a stationary point process on $\R^d$ and $c_{[2]}^{red}$, $c_{[3]}^{red}$ and $c_{[4]}^{red}$  the densities of its factorial cumulant moment measures  of order $2$, $3$ and $4$, respectively, assuming they exist. If $\X$ is a DPP with kernel $C$ verifying the condition $\K(\rho)$, for a given $\rho>0$, then we deduce from Definitions~\ref{DPPdefinition intro} and~\ref{definition factorial cumulant measure} that for all $(u,v,w) \in \R^{3d}$,
\begin{align}
 c^{red}_{[2]}(u) =& - C^2(u), \label{expression densite cumulant 2}\\
 c^{red}_{[3]} (u,v) =& \ 2 \ C(u)C(v)C(v-u), \label{expression densite cumulant 3} \\
 c^{red}_{[4]} (u,v,w) =& -2 \big[  C(u)C(v)C(u-w)C(v-w) + C(u) C(w) C(u-v) C(v-w) \notag   
   \\&+ C(v) C(w) C(u-v) C(u-w)  \big]. \label{expression densite cumulant 4}
\end{align}

\begin{lemma}\label{calcul variance intensite}
 Let $f$ be a function from $\R^{d}$ into $\R$ that is bounded, measurable and compactly supported. Then we have
 \begin{align*}
  \mathrm{Var}  \left( \sum_{x \in \X} f(x) \right) = \int_{\R^{2d}} f(x)f(x+y) c^{red}_{[2]}(y) dxdy  + \rho \int_{\R^d} f^2(x)dx.
 \end{align*}
\end{lemma}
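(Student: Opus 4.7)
The plan is to compute the variance by expanding the square and separating the diagonal contribution from the off-diagonal one, then recognise the remainder as an integral against the density of the reduced factorial cumulant measure of order $2$.

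First I would write
\begin{align*}
\E\Bigl(\sum_{x\in\X} f(x)\Bigr)^2 = \E\sum^{\neq}_{(x_1,x_2)\in\X^2} f(x_1)f(x_2) + \E\sum_{x\in\X} f(x)^2,
\end{align*}
which is just the elementary decomposition of the square of a sum into diagonal and off-diagonal parts. By Definition~\ref{definition factorial moment intro}, the off-diagonal expectation equals $\int_{\R^{2d}} f(x_1)f(x_2)\,\alpha^{(2)}(dx_1\,dx_2)$; under stationarity this has density $\rho^{(2)}(x_1,x_2)$. The diagonal expectation, by Campbell's formula applied to $f^2$, equals $\rho\int_{\R^d} f(x)^2\,dx$. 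Combining with $(\E\sum_{x\in\X}f(x))^2 = \rho^2(\int f)^2 = \rho^2\int\int f(x_1)f(x_2)\,dx_1\,dx_2$, the variance becomes
\begin{align*}
\mathrm{Var}\Bigl(\sum_{x\in\X} f(x)\Bigr) = \int_{\R^{2d}} f(x_1)f(x_2)\bigl[\rho^{(2)}(x_1,x_2)-\rho^2\bigr]\,dx_1\,dx_2 + \rho\int_{\R^d} f^2(x)\,dx.
\end{align*}

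Next I would identify the bracket as the density of $\gamma_{[2]}$. Applying Definition~\ref{definition factorial cumulant measure} at $k=2$ (only two partitions of $\{1,2\}$, into one set and into two singletons) gives $\gamma_{[2]}(A_1\times A_2) = \alpha^{(2)}(A_1\times A_2) - \alpha^{(1)}(A_1)\alpha^{(1)}(A_2)$, so $\gamma_{[2]}$ has density $\rho^{(2)}(x_1,x_2)-\rho^2$. Unwinding Definition~\ref{definition reduced measure} with $k=2$ and assuming $\gamma^{red}_{[2]}$ has density $c^{red}_{[2]}$, the substitution $y=x_1-x_2$ shows that the density of $\gamma_{[2]}$ at $(x_1,x_2)$ equals $c^{red}_{[2]}(x_1-x_2)$; hence $\rho^{(2)}(x_1,x_2)-\rho^2 = c^{red}_{[2]}(x_1-x_2)$.

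Finally, inserting this in the variance formula and changing variables $y=x_1-x_2$, $x=x_2$ in the double integral yields exactly $\int_{\R^{2d}} f(x)f(x+y)\,c^{red}_{[2]}(y)\,dx\,dy$, which concludes the proof. There is no real obstacle here: the whole argument is a bookkeeping exercise tying together the definitions of the factorial moment measure, the factorial cumulant moment measure, and its reduced version; the only minor care needed is the change of variables that produces the shifted argument $x+y$ in $f$ while keeping $c^{red}_{[2]}(y)$.
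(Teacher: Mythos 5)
Your proposal is correct and follows essentially the same route as the paper's proof: the diagonal/off-diagonal split of the squared sum, identification of $\rho^{(2)}-\rho^2$ with the density of $\gamma_{[2]}$ via Definition~\ref{definition factorial cumulant measure}, and the passage to $c^{red}_{[2]}$ through Definition~\ref{definition reduced measure} with the change of variables $y=x_1-x_2$. The extra detail you give (Campbell's formula for the diagonal term, the explicit two partitions at $k=2$) merely makes explicit what the paper leaves implicit.
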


\begin{proof}
Notice that 
\begin{align*}
  \left( \sum_{x \in \X} f(x) \right) ^2 = \sum_{(x,y) \in \X^2} f(x)f(y) + \sum_{x \in \X} f^2(x).
\end{align*}
Then, denoting $\rho^{(2)}$ the density of the second order factorial moment measure, we have by Definitions~\ref{definition factorial moment intro} and~\ref{definition factorial cumulant measure},
 \begin{align*}
   \mathrm{Var}  \left( \sum_{x \in \X} f(x) \right) &= \int_{\R^{2d}} f(x)f(y) \left( \rho^{(2)}(x,y) - \rho^2\right) dxdy  + \rho \int_{\R^d} f^2(x)dx\\ 
                                                     &= \int_{\R^{2d}} f(x)f(y)c_{[2]}(x,y) dxdy  + \rho \int_{\R^d} f^2(x)dx.
 \end{align*}
Finally, by Definition~\ref{definition reduced measure}, we have
\begin{align*}
 \mathrm{Var}  \left( \sum_{x \in \X} f(x) \right) &= \int_{\R^{2d}} f(x)f(x+y) c^{red}_{[2]}(y) dxdy  + \rho \int_{\R^d} f^2(x)dx.
\end{align*}
\end{proof}

\begin{lemma}\label{calcul variance general}
Let $f$ be a function from $\R^{2d}$ into $\R$ that is bounded, measurable and compactly supported. Then, we have
 \begin{align*}
  &\mathrm{Var} \left( \sum_{(x,y) \in \X^2}^{\neq} f(x,y) \right) \\
  &= \int_{\R^{2d}} \left( f^2(x,x+y)  + f(x,x+y) f(x+y,x)  \right) c_{[2]}^{red}(y) dx dy \\
  &+ \rho^2 \int_{\R^{2d}} \left( f^2(x,y) + f(x,y)f(y,x) \right)dx dy \\
  &+ \int_{\R^{3d}}  \left[ f(x,x+y) + f(x+y,x)  \right] \left[ f(x+y,x+u)+ f(x+u,x+y) \right] c_{[3]}^{red}(y,u) dx dy du \\
  &+ 2\rho \int_{\R^{3d}} \left[f(x,y) + f(y,x) \right] \left[f(y,y+u) + f(y+u,y) \right] c_{[2]}^{red}(u)  dxdydu \\
  &+ \rho \int_{\R^{3d}} \left[f(x,y) + f(y,x) \right] \left[f(y,x+u) + f(x+u,y)  \right] c_{[2]}^{red}(u)  dxdydu \\
  &+ \rho^3  \int_{\R^{3d}} \left[f(x,y) + f(y,x) \right] \left[f(y,u) + f(u,y)  \right]  dxdydu \\
  &+ \int_{\R^{4d}}   f(x,x+y) f(x+u,x+v)  c_{[4]}^{red}(y,u,v) dx dy du dv  \\
  &+4 \rho \int_{\R^{4d}}   f(x,y) f(y+u,y+v)  c_{[3]}^{red}(u,v) dx dy du dv   \\
  &+ 2\int_{\R^{4d}}    f(x,y) f(x+u,y+v) c_{[2]}^{red}(u) c_{[2]}^{red}(v) dx dy du dv \\
  &+4 \rho^2 \int_{\R^{4d}}  f(x,y) f(x+u,v)  c_{[2]}^{red}(u)  dx dy du dv .
 \end{align*}
\end{lemma}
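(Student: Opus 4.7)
The plan is to compute $\mathrm{Var}(S) = \E(S^2) - (\E S)^2$ with $S := \sum_{(x,y)\in\X^2}^{\neq} f(x,y)$ by a systematic combinatorial expansion. I first write $S^2$ as a double sum indexed by ordered pairs $(x_1,y_1)$ and $(x_2,y_2)$ of points of $\X$ with $x_1\neq y_1$, $x_2\neq y_2$, and decompose it according to the cardinality of $\{x_1,y_1,x_2,y_2\}$, which, $\X$ being simple, equals $2$, $3$ or $4$. The cardinality-two piece splits into the two sub-cases $(x_2,y_2)=(x_1,y_1)$ and $(x_2,y_2)=(y_1,x_1)$; the cardinality-three piece splits into the four sub-cases in which exactly one of $x_2,y_2$ equals one of $x_1,y_1$; and the cardinality-four piece is a pure sum over $4$-tuples of distinct points of $\X$. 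Taking expectations and using Definition~\ref{definition factorial moment intro} turns $\E(S^2)$ into a sum of integrals of $f\cdot f$ against the densities $\rho^{(2)}$, $\rho^{(3)}$ and $\rho^{(4)}$.

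Next, inverting the M\"obius relation of Definition~\ref{definition factorial cumulant measure} expands each $\rho^{(k)}$ as a sum over set partitions of $\{1,\dots,k\}$ of products of the cumulant densities $c_{[|B|]}$, with factors of $\rho=c_{[1]}$ coming from singleton blocks, giving $\rho^{(2)}=c_{[2]}+\rho^2$, a three-term formula for $\rho^{(3)}$, and a five-term formula for $\rho^{(4)}$. The quantity $(\E S)^2=\int f(x_1,y_1)f(x_2,y_2)\rho^{(2)}(x_1,y_1)\rho^{(2)}(x_2,y_2)\,dx_1 dy_1 dx_2 dy_2$ coincides exactly with the contribution of the partition $\{\{x_1,y_1\},\{x_2,y_2\}\}$ in the cardinality-four block, and therefore cancels that one contribution after subtraction, leaving the $c_{[4]}$ term, the four $c_{[3]}\rho$ terms, the two remaining $c_{[2]}c_{[2]}$ terms, and the four remaining $c_{[2]}\rho^2$ terms from the cardinality-four block, while the cardinality-two and cardinality-three blocks survive untouched. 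Finally, stationarity through Definition~\ref{definition reduced measure} lets me replace each $c_{[k]}(z_1,\dots,z_k)$ by $c_{[k]}^{red}(z_2-z_1,\dots,z_k-z_1)$, and a change of variables that fixes one point as a base and uses the increments as the other variables casts each surviving integral into the form of one of the twelve integrals appearing on the right-hand side.

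The main obstacle is not conceptual but clerical: the expansion generates on the order of thirty distinct integrals, and the challenge is matching them exactly to the grouped expressions in the statement, including accounting for the coefficients $2$, $2\rho$, $\rho$, $4\rho$, $2$ and $4\rho^2$. These coefficients arise from the combination of (i) the number of set partitions of a given block shape in the M\"obius expansion that produce the same integrand, (ii) the number of cardinality-three sub-cases contributing the same pattern once the change of variables is applied, and (iii) the symmetrizations induced by relabelings of the arguments of $f$ (such as $f(x,y)+f(y,x)$ appearing as a symmetrization over the two positions of a distinguished point in the cumulant). A disciplined tabulation indexed first by the cardinality of $\{x_1,y_1,x_2,y_2\}$, then by partition type, and finally by the relabeling dictated by the change of variables, is what turns the bookkeeping into the stated formula.
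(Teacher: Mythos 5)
Your proposal is correct and takes essentially the same route as the paper: split the square of the sum according to whether the two ordered pairs share two, one, or no points, pass to factorial moment densities, expand these in factorial cumulant densities, and use stationarity to pass to the reduced densities; the paper simply delegates the detailed bookkeeping to Heinrich and Proke\v{s}ov\'a's Lemma 5 after writing down exactly this decomposition for non-symmetric $f$. One minor wording slip: $(\E S)^2$ cancels the combined contribution of the four partitions refining $\{\{x_1,y_1\},\{x_2,y_2\}\}$ (namely $c_{[2]}c_{[2]}+\rho^2 c_{[2]}(x_1,y_1)+\rho^2 c_{[2]}(x_2,y_2)+\rho^4$), not just the single pair partition, though your tally of surviving terms shows you carried out the correct cancellation.
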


\begin{proof}
This lemma is a generalization of~\cite[Lemma 5]{HeinrichProkesova:10} for a function $f$ non necessary symmetric. The variance is first computed with respect to the factorial moment measure by Definition~\ref{definition factorial moment intro}. Then, the factorial moment measure is written in terms of the factorial cumulant moment measure by~\cite[Corollary 5.2 VII]{daleyvol1} and the result is  obtained by using Definition~\ref{definition reduced measure}. We  refer to the proof of~\cite[Lemma 5]{HeinrichProkesova:10} for the detailed calculus, the only change being the use of the following decomposition in place of the original one,
\begin{align*}
 \left( \sum_{(x,y)\in \X^2}^{\neq} f(x,y) \right) ^2  &= \sum_{(x,y)\in \X^2}^{\neq} f^2(x,y) + f(x,y)f(y,x)  \\
 &+\sum_{(x,y,u)\in \X^3}^{\neq}  \left[ f(x,y)+f(y,x) \right] \left[f(y,u) + f(u,y) \right]  \\
 &+ \sum_{(x,y,u,v)\in \X^4}^{\neq} f(x,y)f(u,v).
\end{align*}
\end{proof}

%
%
 
\begin{lemma}\label{calcul covariance 3 termes general}
Let $f$ be a function from $\R^{2d}$ into $\R$ that is bounded, measurable, symmetric and compactly supported. Let $h$ be a function from  $\R^{d}$ into $\R$ that is bounded, measurable and compactly supported. Then, we have
 \begin{align*}
  \mathrm{Cov} \left( \sum_{(x,y) \in \X^2}^{\neq} f(x,y) , \sum_{u\in \X} h(u) \right)  &= \int_{\R^{3d}}  f(x,x+y)h(u+x)   c_{[3]}^{red}(y,u) dx dy du \\
  &+ \rho \int_{\R^{3d}} f(x,y)h(u+x)    c_{[2]}^{red}(u) dx dy du \\
  &+ \rho \int_{\R^{3d}} f(x,y)h(u+y)    c_{[2]}^{red}(u) dx dy du \\
  &+  \int_{\R^{2d}} f(x,y+x)  \left[ h(x) +h(y+x) \right] c_{[2]}^{red}(y)  dxdy \\
  &+ \rho^2 \int_{\R^{2d}}f(x,y)  \left[ h(x) + h(y)  \right]  dxdy.
 \end{align*}
\end{lemma}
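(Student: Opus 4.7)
My plan is to follow the same scheme used in the proof of Lemma~\ref{calcul variance general}, namely (i) reduce the product of sums to a linear combination of sums over pairwise distinct tuples, (ii) take expectations to obtain integrals against factorial moment measures, (iii) re-express these in terms of factorial cumulant measures, and finally (iv) invoke stationarity to replace them by their reduced versions from Definition~\ref{definition reduced measure}.

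The first step is the decomposition
\begin{align*}
\Bigl(\sum_{(x,y)\in \X^2}^{\neq} f(x,y)\Bigr)\Bigl(\sum_{u\in \X} h(u)\Bigr)
&= \sum_{(x,y,u)\in \X^3}^{\neq} f(x,y)h(u) \\
&\quad + \sum_{(x,y)\in \X^2}^{\neq} f(x,y)\bigl[h(x)+h(y)\bigr],
\end{align*}
obtained by separating the cases $u=x$, $u=y$ and $u\notin\{x,y\}$. Taking expectations, the Campbell-type formula implicit in Definition~\ref{definition factorial moment intro} yields a sum of three integrals involving $\rho^{(3)}$ and $\rho^{(2)}$ respectively, while the product of the two mean values that I need to subtract splits, by the same definition, as $\int f(x,y)\rho^{(2)}(x,y)\,dxdy\cdot\rho\int h(u)\,du$.

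Next I would invoke the classical inversion formulas linking factorial moment measures to factorial cumulant measures (\cite[Corollary 5.2.VII]{daleyvol1}),
\begin{align*}
\rho^{(2)}(x,y) &= c_{[2]}(x,y)+\rho^{2},\\
\rho^{(3)}(x,y,u) &= c_{[3]}(x,y,u)+\rho\bigl[c_{[2]}(x,y)+c_{[2]}(x,u)+c_{[2]}(y,u)\bigr]+\rho^{3},
\end{align*}
and substitute them into the expression obtained for the covariance. The $\rho^{3}$-term from $\rho^{(3)}$ cancels exactly against the product of expectations, and the $\rho^{2}$-term from $\rho^{(2)}$ cancels against the $\rho \cdot c_{[2]}(x,y)$ contribution coming from $\rho^{(3)}$; the remaining pieces are precisely the five terms that appear in the statement, up to rewriting using reduced measures. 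For the latter I apply Definition~\ref{definition reduced measure}: the change of variables $y\mapsto y-x$ (and $u\mapsto u-x$ in the triple integral) transforms $c_{[2]}(x,y)$, $c_{[2]}(x,u)$, $c_{[2]}(y,u)$ and $c_{[3]}(x,y,u)$ into $c_{[2]}^{red}(y)$, $c_{[2]}^{red}(u)$, $c_{[2]}^{red}(u-y)$ and $c_{[3]}^{red}(y,u)$, and a further translation $u\mapsto u-y$ in the term containing $c_{[2]}(y,u)$ puts $h$ in the form $h(u+y)$ as required.

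The only delicate point is bookkeeping: each single term $\rho\, c_{[2]}(\cdot,\cdot)$ coming from $\rho^{(3)}$ must be paired with the correct companion term from $\rho^{(2)}\cdot\rho$ and correctly relabelled so that the integrand depends on the right argument of $h$. Symmetry of $f$ is used at this stage to identify $\sum_{(x,y)}^{\neq} f(x,y)h(x)$ with $\sum_{(x,y)}^{\neq} f(x,y)h(y)$ after the change of variable $y\mapsto y+x$, giving the $[h(x)+h(y+x)]$-factor in the fourth line of the statement. Since $f$ and $h$ are bounded and compactly supported, all integrals and all manipulations are absolutely convergent, so Fubini and the substitutions are justified without further comment. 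This delivers the five summands in the claimed order.
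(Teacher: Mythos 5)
Your proposal is correct and takes essentially the same route as the paper: the identical decomposition of the product into a sum over distinct triples plus a sum over distinct pairs weighted by $h(x)+h(y)$, expectations via Definition~\ref{definition factorial moment intro}, and then the moment--cumulant inversion of \cite[Corollary 5.2 VII]{daleyvol1} together with Definition~\ref{definition reduced measure}; you merely spell out the cancellations that the paper leaves implicit. One cosmetic slip: the pairing of cancellations is the $\rho^3$ from $\rho^{(3)}$ against $\rho\cdot\rho^2$ and $\rho\,c_{[2]}(x,y)$ against $\rho\,c_{[2]}(x,y)$, not quite as you describe it, but the net computation is the same.
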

\begin{proof}
Notice that
\begin{align*}
 \left( \sum_{(x,y)\in\X^2}^{\neq} f(x,y) \right) \left( \sum_{u\in \X} h(u) \right) =  \sum_{(x,y,u)\in \X^3}^{\neq} f(x,y)h(u) +   \sum_{(x,y)\in\X^2}^{\neq} f(x,y) (h(x)+h(y)).
\end{align*}
Then, by the last equation and Definition~\ref{definition factorial moment intro}, we have
 \begin{align*}
  \mathrm{Cov} \left( \sum_{(x,y) \in \X^2}^{\neq} f(x,y) , \sum_{u\in \X} h(u) \right) &= \iiint f(x,y)h(u) \left(\rho^{(3)} (x,y,u) -\rho \rho^{(2)}(x,y)\right) dx dy du \\
  &+ \iint f(x,y) (h(x)+h(y) ) \rho^{(2)}(x,y) dxdy.
 \end{align*}
Finally,  the proof is concluded by~\cite[Corollary 5.2 VII]{daleyvol1} and Definition~\ref{definition reduced measure}.
\end{proof}

\bibliographystyle{acm}
 \bibliography{biblio_article_brillinger_biscio.bib}

\end{document}